\documentclass[preprint,12pt,numbers]{elsarticle}
\usepackage[english]{babel}

\usepackage[letterpaper,top=2cm,bottom=2cm,left=3cm,right=3cm,marginparwidth=1.75cm]{geometry}

\usepackage{graphicx}%
\usepackage{multirow}%
\usepackage{amsmath,amssymb,amsfonts}%
\usepackage{amsthm}%
\usepackage{mathrsfs}%
\usepackage[title]{appendix}%
\usepackage{xcolor}%
\usepackage{textcomp}%
\usepackage{arydshln}
\usepackage{manyfoot}%
\usepackage{tikz}
\usepackage{booktabs}%
\usepackage{algorithm}%
\usepackage{algorithmicx}%
\usepackage{algpseudocode}%
\usepackage{listings}%
\usetikzlibrary{quantikz2}
\usepackage{physics}
\usepackage[colorlinks=true, allcolors=blue]{hyperref}
\newcommand*\circledc{%
  \mathbin{
    \tikz[baseline=-0.75ex]{ 
      \node[
        shape=circle,
        draw,
        inner sep=0pt,      
        minimum size=1.5ex, 
        line width=0.4pt,
        anchor=center       
      ] (char) {\fontsize{6.5}{6.5}\selectfont c};
    }
  }
}

\newcommand{\diag}{\operatorname{diag}}
\newtheorem{theorem}{Theorem}
\usepackage{lineno}
\newtheorem{example}{Example}%
\newtheorem{remark}{Remark}%
\newtheorem{lemma}[theorem]{Lemma}
\newtheorem{definition}{Definition}%
\newtheorem{proposition}{Proposition}
\newtheorem{coro}{Corollary}

\journal{.}
\begin{document}

\begin{frontmatter}

\title{On Properties of the Conformal Circulant Matrices}
\author{Cristina Manzaneda \corref{cor1}}
\address{Departamento de Matem\'{a}ticas, Universidad Cat\'{o}lica del Norte. Antofagasta, Chile}
\ead{cmanzaneda@ucn.cl}
\author{Enide Andrade }
\address{Center for Research and Development in Mathematics and Applications,
 Department of Mathematics,  University of Aveiro, Portugal.}
\ead{enide@ua.pt}
\cortext[cor1]{Corresponding author}

\begin{abstract}
In this paper, we introduce and study conformal permutation matrices. In particular, for a permutation $\pi_s\in S_n$, we define the corresponding conformal permutation matrix $H_{\pi_s}$ and show that its $d$th power, where $d=n/\gcd(s,n)$, is a diagonal block matrix whose blocks are products of certain matrix blocks $h_i$. We further show that these diagonal blocks share a common subset of eigenvalues and prove that $H_{\pi_s}$ is block-diagonalizable. We also introduce conformal circulant matrices and investigate their eigenvalues. Finally, we establish a necessary and sufficient condition for a rectangular matrix to be conformal circulant.
\end{abstract}
\begin{keyword}  circulant matrix \sep block matrices\sep conformal permutation matrix\sep conformal circulant matrix \sep diagonalization. 
\end{keyword}

\end{frontmatter}

\section{Introduction}
\noindent The study of partitioned matrices is fundamentally rooted in the necessity of managing linear systems whose dimensions exceed the capacity of traditional scalar processing. Let $M_{m,n}(\mathbb{K})$ be the set $m\times n$ matrices with entries on the field $\mathbb{K}$. When $m=n$ this set is denoted by $M_{n}(\mathbb{K}).$ As established by \cite{Horn}, the partitioning of a matrix $A \in M_{m,n}(\mathbb{K})$,  provides a formal framework to reveal underlying structures that would otherwise remain hidden. Additionally, $F=\frac{1}{\sqrt{n}}[\omega^{(i-1)(j-1)}]$, $1\leq i,j\leq n,$
is the discrete Fourier transform, where $\omega=\mbox{\rm{exp}}(\frac{2\pi  \mathrm{i}}{n})$ is a primitive $n$-th root of unity, with $\mbox{\rm{i}}^2=\sqrt{-1}$. In this context, the analysis of rectangular blocks of orders $m_i \neq n_j$ is of particular relevance, as it offers the essential flexibility required to model heterogeneous systems commonly found in fields such as particle physics, structural engineering, and High-Performance Computing (HPC).

A partition of a matrix consists of dividing the matrix into smaller submatrices (blocks) in such a way that every entry in the original matrix belongs to exactly one submatrix.

Partitioning of matrices is often a convenient device for the perception of the useful structure. For example, partitioning $B = [b_1 \cdots b_n] \in M_n(\mathbb{K})$ according to its columns $b_i, i=1, \ldots, n$ reveals the presentation $AB = [Ab_1 \cdots Ab_n]$ of the matrix product, partitioned according to the columns of $AB$.
\medskip 

A partition of a set $\mathcal{S}$ is a collection of subsets of $\mathcal{S}$ such that each element of $\mathcal{S}$ is a member of one and only one of the subsets. For example, a partition of the set $\{1, 2, \dots, n\}$ is a collection of subsets $\alpha_1, \dots, \alpha_t$ (called index sets) such that each integer between 1 and $n$ is in one and only one of the index sets. A sequential partition of $\{1, 2, \dots, n\}$ is a partition in which the index sets have the special form $\alpha_1 = \{1, \dots, i_1\}, \alpha_2 = \{i_1 + 1, \dots, i_2\}, \dots, \alpha_t = \{i_{t-1} + 1, \dots, n\}$.
\medskip

Let $A \in M_{m,n}(\mathbb{K})$. For index sets $\alpha \subseteq \{1, \dots, m\}$ and $\beta \subseteq \{1, \dots, n\}$, we denote by $A[\alpha, \beta]$ the submatrix of $A$ with entries that lie in the rows and columns of $A$ indexed by $\alpha$ and $\beta$, respectively. If $\alpha = \beta$, the submatrix $A[\alpha] = A[\alpha, \alpha]$ is a principal submatrix of $A$. An $n\times n$ matrix has $\binom{n}{k}$ distinct principal submatrices of size $k$.

\begin{example}
   $$\begin{bmatrix} 1 & 2 & 3 \\ 4 & 5 & 6 \\ 7 & 8 & 9 \end{bmatrix} [\{1, 3\}, \{1, 2, 3\}] = \begin{bmatrix} 1 & 2 & 3 \\ 7 & 8 & 9 \end{bmatrix}.$$ \hfill{$\diamond$}
\end{example}

\medskip 

If a matrix is partitioned by sequential partitions of its rows and columns, the resulting partitioned matrix is called a block matrix.

\medskip
It is worth noting that most of the fundamental results concerning structural matrix theory have traditionally been developed for matrices partitioned into square blocks. However, the study of matrices partitioned into rectangular blocks is equally important, as such structures arise naturally in several contemporary applications. In particular, rectangular block matrices play a significant role in quantum data analysis  where the underlying data often exhibit heterogeneous dimensions and require more general block configurations. 

Rectangular matrices appear naturally in problems of quantum data analysis, matrix classification, and dimensionality reduction \cite{Duan17,Duan19}. Furthermore, the study of block circulant matrices is used in block encoding techniques in modern quantum algorithms \cite{Camps, Sunderhauf}. Although circulant matrices and other structured matrices, such as Toeplitz, Hankel, and block circulant matrices, have been thoroughly studied, a unified framework for studying block rectangular circulant matrices within the context of block encoding does not appear to exist. Consequently, the development of a theory and spectral study for rectangular circulant matrices could provide new tools for quantum data processing and the efficient simulation of physical systems.

This motivates the investigation of theoretical properties and results in the broa\-der setting of rectangular block matrices.

\medskip
The main contributions of this paper can be summarized as follows:
\begin{itemize}
    \item In Section 2 some classical partition definitions are reviewed and the definition of sub-order conformity is introduced. Then the conformal product of conformal partitioned matrices is introduced and some operations and properties are described. 
    \item In Section 3, for a certain permutation $\pi_s \in S_n$, the conformal permutation matrix, $H_{\pi_s}$, is introduced. The power $d$ of $H_{\pi_s}$, where $d = \frac{n}{\gcd(s,n)}$, is then expressed as a diagonal block matrix whose blocks $D_{i}, i=1, \ldots, n$ are products of certain matrix blocks $h_i$ of order $m_i \times m_{\pi_s(i)}$. Then it is proven that these matrices $D_i$ will share an identical subset of eigenvalues. Moreover, under certain conditions, it has also been proven that $H_{\pi_s}$ is a block-diagonalizable matrix.
    \item In Section 4 the conformal circulant matrix is introduced and its eigenvalues are investigated. A necessary and sufficient condition for a rectangular matrix to be conformal circulant is presented and their spectral properties are studied.
\end{itemize}

\smallskip
{\bf Notation:} The greatest common divisor of two integers $a$ and $b$ is written as $\gcd(a,b)$. We write $I_n$ for the identity matrix of order $n$. For a square matrix $A$, its inverse is written as $A^{-1}$ and its spectrum by $\sigma{(A)}$.For any matrix $A$, $A^T$ represents the transpose of a matrix $A$. Moreover, $\mathbf{1}=(1,1,\ldots,1)$. If $A$ is a square matrix and $\lambda$ is an eigenvalue of $A$ with corresponding eigenvector $u$, then
$(\lambda, u)$ is called an eigenpair of $A$. The direct sum of two matrices $A$ and $B$ is denoted by is $A \oplus B.$

\section{Conformal partition of matrices and its operations}

In this section, we recall some relevant definitions and introduce the notion of sub-order conformity for partitioned matrices. We then naturally define the addition and product of these matrices, followed by the concepts of conformal product and $A$-conformal matrix.

\subsection{Conformal partition of matrices}

\begin{definition}[\cite{Horn}, 0.7.2] 
    If $\alpha_1, \dots, \alpha_t$ is a partition of $\{1, \dots, m\}$ and $\beta_1,\beta_2,\newline \ldots, \beta_s$ is a partition of $\{1, \dots, n\}$, then the submatrices $A[\alpha_i, \beta_j]$ form a partition of the matrix $A \in M_{m,n}(\mathbb{K}), 1 \leq i \leq t, 1 \leq j \leq s$. If $A \in M_{m,n}(\mathbb{K})$ and $B \in M_{n,p}(\mathbb{K})$ are partitioned so that the two partitions of $\{1, \dots, n\}$ coincide, the two matrix partitions are said to be conformal. 
\end{definition}

\begin{example}
Consider the matrix
\[
A=
\begin{bmatrix}
1 & 2 & 3 & 4\\
5 & 6 & 7 & 8\\
9 & 10 & 11 & 12
\end{bmatrix}
\in M_{3,4}(\mathbb{R}).
\]
Let
$
\alpha_1=\{1,2\},  \alpha_2=\{3\},
$ be a partition of $\{1,2,3\}$, and
$
\beta_1=\{1,2\}, \beta_2=\{3,4\},
$
be a partition of $\{1,2,3,4\}$.

Then the matrix $A$ is partitioned into the blocks
\[
A=
\left[
\begin{array}{cc|cc}
1&2&3&4\\
5&6&7&8\\
\hline
9&10&11&12
\end{array}
\right]
=
\begin{bmatrix}
A[\alpha_1,\beta_1] & A[\alpha_1,\beta_2]\\
A[\alpha_2,\beta_1] & A[\alpha_2,\beta_2]
\end{bmatrix},
\]
where
\[
A[\alpha_1,\beta_1]=
\begin{bmatrix}
1&2\\
5&6
\end{bmatrix},
\qquad
A[\alpha_1,\beta_2]=
\begin{bmatrix}
3&4\\
7&8
\end{bmatrix},
\]
\[
A[\alpha_2,\beta_1]=
\begin{bmatrix}
9&10
\end{bmatrix},
\qquad
A[\alpha_2,\beta_2]=
\begin{bmatrix}
11&12
\end{bmatrix}.
\]

Now consider the matrix
\[
B=
\begin{bmatrix}
1&0\\
2&1\\
3&0\\
4&1
\end{bmatrix}
\in M_{4,2}(\mathbb{R}).
\]

Partition the rows of $B$ by
$\beta_1=\{1,2\}, \beta_2=\{3,4\},$
and partition its columns by
$\gamma_1=\{1\},  \gamma_2=\{2\}.$ Then
\[
B=
\left[
\begin{array}{c|c}
1 & 0\\
2 & 1\\
\hline
3 & 0\\
4 & 1
\end{array}
\right]
=
\begin{bmatrix}
B[\beta_1,\gamma_1] & B[\beta_1,\gamma_2]\\
B[\beta_2,\gamma_1] & B[\beta_2,\gamma_2]
\end{bmatrix},
\]
where
\[
B[\beta_1,\gamma_1]
=
\begin{bmatrix}
1\\
2
\end{bmatrix},
\qquad
B[\beta_1,\gamma_2]
=
\begin{bmatrix}
0\\
1
\end{bmatrix},
\]
\[
B[\beta_2,\gamma_1]
=
\begin{bmatrix}
3\\
4
\end{bmatrix},
\qquad
B[\beta_2,\gamma_2]
=
\begin{bmatrix}
0\\
1
\end{bmatrix}.
\]

Since the partition of the columns of $A$ coincides with the partition of the rows of $B$, the matrix partitions are conformal. Consequently, the product $AB$ can be computed blockwise:
\[
(AB)_{ij}
=
\sum_{k}
A[\alpha_i,\beta_k]\,B[\beta_k,\gamma_j].
\] $\hfill{\diamond}$
\end{example}

To facilitate the extension of the theory to rectangular matrix products and block matrix structures, we reformulated the original definition and notation in a more operational way. The new definitions and notations emphasize the conditions for algebraic operations on rectangular block matrices and lay the groundwork for the study of rectangular block circulant matrices and related matrix products.

\begin{definition}[Sub-order of blocks partition]\label{suborder} \cite{Horn}{(0.7.1)}
    Let $A \in M_{m,n}(\mathbb{K})$ be a partitioned matrix according to the index sets $\alpha = \{\alpha_1, \dots, \alpha_t\}$ and $\beta = \{\beta_1, \dots, \beta_s\}$. We shall define the sub-order of row block $i$ as the cardinality $m_i = |\alpha_i|$, and the sub-order of column block $j$ as the cardinality $n_j = |\beta_j|$. 
\end{definition}
In what follows we use the notation $A=\left[ A[m_i,n_j]\right]$ for a block matrix, and we shall understand that $A[m_i,n_j]$ denotes a block of sub-order $m_i \times n_j$, in the sense of the Definition \ref{suborder}. Therefore, we introduce a new type of conformity between partitioned matrices, which we refer as \emph{sub-order conformity}. Following this definition we define the two corresponding operations, sum and product. 

\begin{definition}[Sub-order Conformity]\label{Sub-order Conformity} 
Let $A =\left[ A[m_i,p_j] \right] \in M_{m,n}(\mathbb{K})$ and $B = \left[B[q_i,n_j]\right] \in M_{n,p}(\mathbb{K})$ be partitioned matrices. 

We say that $A$ and $B$ have
\emph{sub-order conformity} if and only if the number of column blocks
of $A$ coincides with the number of row blocks of $B$, say $s=t$, and
\[
p_j=q_j,\qquad j=1,\ldots,s.
\]
In other words, the sub-order of each column block of $A$ coincides
with the sub-order of the corresponding row block of $B$.
\end{definition}

The notion of sub-order conformity allows us to define the addition and multiplication of partitioned matrices in a natural way. We note that, in \cite{Horn},  matrix partitions whose blocks have compatible dimensions for matrix multiplication are said to be conformal. In contrast, our notion of sub-order conformity is based solely on the compatibility of the sub-orders of the corresponding blocks.

Thus, for the sum $A+B$ to be well defined, the matrices must have the same sub-order conformity. In this case,
 $$(A+ B)[m_i, n_j] = A[m_i, n_j] + B[m_i, n_j].$$
 
The result is a new partitioned matrix where each block is the sum of the corresponding blocks of the original matrices. The conformal multiplication of two block matrices requires a specific compatibility between the partitions. For the product $AB$ to be defined, the partition of the columns of $A$ must agree with the partition of the rows of $B$. Under this compatibility condition, the $(i,j)$-th block of $AB$ is given by
$$
(AB)[m_i,n_j]=\sum_{k=1}^{s}A[m_i,p_k]\,B[p_k,n_j].
$$
Therefore, block matrix multiplication follows the same rule as ordinary matrix multiplication, with scalar entries replaced by compatible submatrices.

\subsection{Conformal product of  conformal partitioned matrices }
In this section we present the conformal product which provides a natural way of combining a partitioned matrix with an ordinary matrix while preserving the block structure. More precisely, the matrix $K$ in the definition below determines how the column blocks of A are linearly combined to form the blocks of a new partitioned matrix. 

\begin{definition}\label{Def conf prod}
Let $A=[A[m_i,n_j]]$ be a partitioned matrix with row blocks
\(m_1,\ldots,m_r\) and columns blocks
\(n_1=\cdots=n_s=n_0\).
Let $K=[k_{ij}]\in M_{s,p}(\mathbb K).$
The \textit{conformal product} of $A$ and $K$, denoted by
$A\circledc K$, is the partitioned matrix
\[
A\circledc K=[C_{ij}],
\]
whose $(i,j)$-th block is defined by
\[
C_{ij}
=
\sum_{t=1}^{s}
A[m_i,n_t]\, k_{tj},
\qquad
i=1,\ldots,r,\;
j=1,\ldots,p.
\]
The matrix $A\circledc K$ is called the
$A$-\textit{conformal matrix associated with} $K$
and it is denoted by $A_K = \left[A_K [m_i, n_j]\right]$.
\end{definition}


\begin{example}
Let $A=\begin{pmatrix}
\begin{bmatrix}
    a_{11}\\
    a_{21}
\end{bmatrix} &\begin{bmatrix}
    b_{11}\\
    b_{21}
\end{bmatrix}&\begin{bmatrix}
    c_{11}\\
    c_{21}
\end{bmatrix} \\[15pt]
 \begin{bmatrix}
     f_{11}
 \end{bmatrix}&  \begin{bmatrix}
     g_{11}
 \end{bmatrix}& \begin{bmatrix}
     h_{11}
 \end{bmatrix}
    \end{pmatrix} \in M_{3,3} (\mathbb{K})$ and consider the matrix 
    $ K=\begin{pmatrix}
        1&2&3\\
        -2&2&3\\
        1&1&1
    \end{pmatrix}.$ Then
\[
   A_K = A \circledc 
    K=
   \begin{pmatrix}
\begin{bmatrix}
    a_{11}-2b_{11}+c_{11}\\
    a_{21}-2b_{21}+c_{21}
\end{bmatrix} &\begin{bmatrix}
    2a_{11}+2b_{11}+c_{11}\\
    2a_{21}+2b_{21}+c_{21}
\end{bmatrix}&\begin{bmatrix}
    3a_{11}+3b_{11}+c_{11}\\
    3a_{21}+3b_{21}+c_{21}
\end{bmatrix}\\[15pt]
 \begin{bmatrix}
     f_{11}-2 g_{11}+h_{11}
 \end{bmatrix}&  \begin{bmatrix}
     2f_{11}+2 g_{11}+h_{11}
 \end{bmatrix}& \begin{bmatrix}
     3f_{11}+3 g_{11}+h_{11}
 \end{bmatrix}
    \end{pmatrix}.
\]
Note that the first column  of blocks in $A_K$ is 
$2 A[m_i, n_1]+ 2 A[m_i, n_2] + A[ m_i, n_3],$ with $n_1=n_2=n_3=1.$ The same reasoning can be checked for the others. 
\end{example}$\hfill{\diamond}$

\begin{example} 
   \[
   \begin{pmatrix}
       \begin{bmatrix}
           1&2&3\\
           1&0&1
       \end{bmatrix}&\begin{bmatrix}
           2&2&1\\
           1&1&1
       \end{bmatrix}\\[13pt]
       \begin{bmatrix}
           0&1&0
       \end{bmatrix}&\begin{bmatrix}
           1&1&0
       \end{bmatrix}
   \end{pmatrix}\circledc \begin{pmatrix}
       2&1\\
       3&1
   \end{pmatrix}= \begin{pmatrix}
       \begin{bmatrix}
           8&10&9\\
           5&3&5
       \end{bmatrix}&\begin{bmatrix}
           3&4&4\\
           2&1&2
       \end{bmatrix}\\[13pt]
       \begin{bmatrix}
           3&5&0
       \end{bmatrix}&\begin{bmatrix}
           1&2&0
       \end{bmatrix}
   \end{pmatrix}.
   \]
\end{example}$\hfill{\diamond}$

Let $A\in M_{m,n}(\mathbb{K})$, and let $CM_A(\mathbb{K})$ denote the set of $A$-conformal matrices
$$
CM_A(\mathbb{K})=\{A\circledc K:\,K\in M_{s,p}(\mathbb{K})\}.
$$

 \begin{remark}\label{operations A-conformal matrix}
With the following operations, the set $CM_A(\mathbb{K})$ forms a subring of $M_{m,p}(\mathbb{K})$. \ 
\begin{description}
        \item[a)] $(A\circledc K_1) \cdot (A\circledc K_{2}) = A\circledc (K_{1}K_{2})$ assuming that the product $K_1 K_2$ is defined.
        \item[b)] $(A\circledc K_1) + (A \circledc K_{2}) = A \circledc (K_1+K_2).$
    \end{description}
\end{remark}

\begin{proposition}
Let  $D\in M_{n}(\mathbb{K})$ and $K \in M_{s,p} (\mathbb{K})$ be invertible matrices, where $D$ is a diagonal matrix. Then
    \[
    (D\circledc K)^{-1}=K^{-1}\circledc D^{-1}
    \]
    provided that $D$ (or $K$) is a square matrix partitioned into $1\times 1$ blocks.
\end{proposition}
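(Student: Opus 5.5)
The plan is to reduce everything to ordinary matrix multiplication. The key observation is this: when the left factor of a conformal product is partitioned into $1\times 1$ blocks, Definition~\ref{Def conf prod} gives back the usual product. Indeed, if $A\in M_{r,s}(\mathbb K)$ is partitioned into $1\times1$ blocks, then $m_i=n_0=1$ and $A[m_i,n_t]=a_{it}$. The $(i,j)$ block of $A\circledc K$ is then the scalar $\sum_{t=1}^{s}a_{it}k_{tj}=(AK)_{ij}$, so $A\circledc K=AK$. I will record this as a preliminary remark.

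First I fix the dimensions. $D\in M_n(\mathbb K)$ is partitioned into $1\times1$ blocks, so it has $s=n$ column blocks. Hence $K\in M_{n,p}(\mathbb K)$, and since $K$ is invertible, $p=n$. By the observation, $D\circledc K=DK$. This is a product of two invertible $n\times n$ matrices, so it is invertible with $(DK)^{-1}=K^{-1}D^{-1}$.

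Next I read $K^{-1}\circledc D^{-1}$ with $K^{-1}$ partitioned into $1\times1$ blocks. This is the meaning of the proviso ``$D$ (or $K$)'': whichever matrix stands on the left of $\circledc$ carries the trivial partition. With that reading, $K^{-1}$ has $n$ column blocks of size $n_0=1$, and $D^{-1}=\diag(d_1^{-1},\dots,d_n^{-1})\in M_{n}(\mathbb K)$ has the right size to serve as the coefficient matrix. Applying the observation again gives $K^{-1}\circledc D^{-1}=K^{-1}D^{-1}$, and combining the two steps yields $(D\circledc K)^{-1}=K^{-1}\circledc D^{-1}$.

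Diagonality of $D$ is only used to guarantee invertibility and to make explicit that the $(i,j)$ entry of $D\circledc K$ is $d_i k_{ij}$; I would mention this as a concrete check. A direct verification is also available: the $(i,j)$ entry of $(K^{-1}\circledc D^{-1})(D\circledc K)$ is
\[
\sum_{l} (K^{-1})_{il}\, d_l^{-1}\, d_l\, k_{lj}=\delta_{ij}.
\]

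The main obstacle is not computational but interpretive. I have to justify that the proviso forces the $1\times1$ partition on whichever factor plays the role of $A$ in Definition~\ref{Def conf prod}. I also have to check that the block count $s$ and the common column sub-order $n_0=1$ are consistent in both products. Once that bookkeeping is set, the proof is two applications of the reduction $A\circledc K=AK$.
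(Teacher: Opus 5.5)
Your argument is correct. For the case where $D$ is partitioned into $1\times1$ blocks, it is the paper's proof in cleaner form. The paper writes $(D\circledc K)_{ij}=d_ik_{ij}$ and $(K^{-1}\circledc D^{-1})_{ij}=(K^{-1})_{ij}d_j^{-1}$ and checks entrywise that the product is $I_n$. That is exactly the check $(DK)(K^{-1}D^{-1})=I_n$. Your identity $A\circledc K=AK$ turns it into the standard rule $(DK)^{-1}=K^{-1}D^{-1}$. It also shows that diagonality of $D$ plays no role at all; it does not ``guarantee invertibility'', which is a separate hypothesis.

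The real difference is how you read ``$D$ (or $K$)''. The paper treats it as a second, separate case: $D=\diag(d_1,\dots,d_n)$ has nontrivial square blocks $d_i$, and $K$ is scalar-partitioned. It disposes of this case by noting that the scalars $k_{it}$, $(K^{-1})_{tj}$ commute with the blocks $d_t$.

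Rather than excluding this case by interpretation, close it off with the dimension count you mention:
\begin{itemize}
\item If $D$ has $s$ column blocks of width $n_0$, then $n=sn_0$ and $K\in M_s(\mathbb K)$.
\item Definition~\ref{Def conf prod} requires the right factor $D^{-1}\in M_n(\mathbb K)$ of $K^{-1}\circledc D^{-1}$ to be a scalar matrix with as many rows as $K^{-1}$ has column blocks. That number is at most $s$.
\item Hence $n=sn_0\le s$, so $n_0=1$, and the column blocks of $K^{-1}$ also have width $1$. Row partitions do not change the underlying matrices.
\end{itemize}
So under the paper's own definition the proviso is automatic, and the second case collapses to yours.

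If you instead adopt the looser block reading the paper tacitly uses there, your method still goes through verbatim, with $D\circledc K=D(K\otimes I_{n_0})$ and $K^{-1}\circledc D^{-1}=(K^{-1}\otimes I_{n_0})D^{-1}$.
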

\begin{proof}
 First, we assume that $D=\left[D[1,1]\right]=\diag(d_{1},d_2,\ldots,d_n)$ is partitioned into square blocks of order $1$ and $K=\left[K[m_i,n_j]\right]=(k_{ij})$  with $m_i, n_j \neq 1$. Since $D$ is partitioned into $n$ of $1\times1$ blocks, the conformality of the product implies that $s=n$. Then

$$(D\circledc K)_{ij}=d_{i}k_{ij}
,\quad i, j=1, \dots, n,$$ 
and 
$$
   ( K^{-1}\circledc D^{-1})_{ij}=(K^{-1})_{ij}d_{j}^{-1}, i,j =1, \ldots, n  
$$

Now, for every $i,j=1,\ldots,n$,
\[
\begin{aligned}
((D\circledc K)(K^{-1}\circledc D^{-1}))_{ij}
&=
\sum_{t=1}^{n}
d_t\,k_{it}\,(K^{-1})_{tj}\,d_j^{-1} \\
&=
d_jd_j^{-1}
\sum_{t=1}^{n}
k_{it}(K^{-1})_{tj} \\
&=
\sum_{t=1}^{n}
k_{it}(K^{-1})_{tj} \\
&=
\delta_{ij},
\end{aligned}
\mbox{where } 
d_t\delta_{tj}=d_j\delta_{tj}.
\]
Hence,
$(D\circledc K)(K^{-1}\circledc D^{-1})=I_n.$ Similarly,
$(K^{-1}\circledc D^{-1})(D\circledc K)=I_n,$
and therefore
$(D\circledc K)^{-1}=K^{-1}\circledc D^{-1}.$

Now, let us assume that $D=[D[m_i,n_j]]=\diag(d_1,d_2\ldots,d_n)$ with $m_i, n_j \neq 1$, and $K=\left[K[1,1]\right]=(k_{ij})$ then it suffices to note that in the development of the product as we have $$d_t\,k_{it}\,(K^{-1})_{tj}\,d_j^{-1}=d_t\,d_j^{-1}\,k_{it}\,(K^{-1})_{tj},$$ 
the result follows.
\end{proof}
\section{Conformal permutation matrix}
Permutation matrices play a fundamental role in matrix theory and linear algebra. A permutation matrix is obtained from the identity matrix by permuting its rows or columns. Multiplication by a permutation matrix corresponds to a reordering of rows or columns: left multiplication permutes rows, whereas right multiplication permutes columns. Permutation matrices are orthogonal, so that $P^{-1}=P^{T}$. Moreover, they preserve many important matrix properties, including rank, determinant up to sign, and eigenvalues under similarity transformations. Because of these properties, permutation matrices are widely used in numerical linear algebra, graph theory in the study of structured matrices.
Let $\pi \in S_{n}$, where $S_{n}$ is the symmetric group.  Each element $\pi \in S_{n}$ corresponds to a permutation matrix $P_{\pi}= (p_{i,j})$ where $p_{i,j}=1$ if $j= \pi(i)$ and zero otherwise. A square matrix having in each row and column only one non-zero element is called a generalized permutation. We now introduce the definition of conformal permutation matrix.

\begin{definition}[conformal permutation matrix] \label{diagonal} Let \(\pi\in S_n\) and let \(P_\pi\) be a generalized permutation matrix.
Consider the block matrix
\[
D=\left[D[m_i,n_j]\right]_{i,j=1}^n,
\]
where
\[
D[m_i,n_j]=
\begin{cases}
d_i, & \text{if } j=i,\\
0_{m_i\times n_j}, & \text{if } j\neq i,
\end{cases}
\]
and each \(d_i\) is an \(m_i\times n_i\) matrix over \(\mathbb K\). The matrix
\[
D_{\pi}
=
D\circledc P_\pi
\]
is called the \emph{conformal permutation matrix} associated with
\(\pi\).

\end{definition}
According to Definition~\ref{Def conf prod}, $D\circledc P_\pi$ is denoted by $D_{P_\pi}$, but for simplicity and for better handling of operations we just denote it by $D_{\pi}$.\\

\begin{remark}
    From Remark~\ref{operations A-conformal matrix}, and considering $D$ as in the previous definition, it follows immediately: 
   \[
D_{\pi^{\ell}} = D\circledc P^\ell_\pi = D \circledc P_{\pi^{\ell}};\quad\textit{for all }\quad\ell\in\mathbb{N}.
    \]
Thus, $D_{\pi^{\ell}}$ is an $\ell$-conformal power of the generalized permutation matrix $P_{\pi}$, for $\ell\in\mathbb{N}.$
\end{remark}
\begin{example} Let $\pi_1= (1\, 2\, 3)$ and $P_{\pi_1}=\begin{pmatrix}
        0&1&0\\
        0&0&1\\
        1&0&0
    \end{pmatrix}$. Then,
    \small{
\[
D_{\pi_1}=
D\circledc P_{\pi_1}
=\]
\[
\begin{pmatrix}
        \begin{bmatrix}
            a_{11}&a_{12}&a_{13}\\
            a_{21}&a_{22}&a_{23}
        \end{bmatrix} & \begin{bmatrix}
            0\\ 
            0
        \end{bmatrix} &\begin{bmatrix}
            0&0\\
            0&0
        \end{bmatrix}\\[15pt]
        \begin{bmatrix}
            0&0&0
        \end{bmatrix} &\begin{bmatrix}
            b
        \end{bmatrix} &\begin{bmatrix}
            0 &0
        \end{bmatrix}\\[15pt]
        \begin{bmatrix}
            0&0&0\\
            0&0&0
        \end{bmatrix} &\begin{bmatrix}
            0\\
            0
        \end{bmatrix}&\begin{bmatrix}
            c_{11} &c_{12}\\
            c_{21} &c_{22}
        \end{bmatrix}
    \end{pmatrix}\circledc P_{\pi_1}=
    \begin{pmatrix}
        \begin{bmatrix}
            0&0\\
            0&0
        \end{bmatrix}&\begin{bmatrix}
            a_{11}&a_{12}&a_{13}\\
            a_{21}&a_{22}&a_{23}
        \end{bmatrix} & \begin{bmatrix}
            0\\ 
            0
        \end{bmatrix}\\[15pt]
     \begin{bmatrix}
            0 &0
        \end{bmatrix}&  \begin{bmatrix}
            0&0&0
        \end{bmatrix} &\begin{bmatrix}
            b
        \end{bmatrix} \\[15pt]
      \begin{bmatrix}
            c_{11} &c_{12}\\
            c_{21} &c_{22}
        \end{bmatrix}&   \begin{bmatrix}
            0&0&0\\
            0&0&0
        \end{bmatrix} &\begin{bmatrix}
            0\\
            0
\end{bmatrix}
\end{pmatrix},
    \]
    }
and, the $2$-conformal power of$P_{\pi_1}$ is
\small{
    \[D_{\pi ^{2}}=D\circledc P_{\pi_1^{2}}
=\\ \]
    \[
    \begin{pmatrix}
        \begin{bmatrix}
            a_{11}&a_{12}&a_{13}\\
            a_{21}&a_{22}&a_{23}
        \end{bmatrix} & \begin{bmatrix}
            0\\ 
            0
        \end{bmatrix} &\begin{bmatrix}
            0&0\\
            0&0
        \end{bmatrix}\\[15pt]
        \begin{bmatrix}
            0&0&0
        \end{bmatrix} &\begin{bmatrix}
            b
        \end{bmatrix} &\begin{bmatrix}
            0 &0
        \end{bmatrix}\\[15pt]
        \begin{bmatrix}
            0&0&0\\
            0&0&0
        \end{bmatrix} &\begin{bmatrix}
            0\\
            0
        \end{bmatrix}&\begin{bmatrix}
            c_{11} &c_{12}\\
            c_{21} &c_{22}
        \end{bmatrix}
    \end{pmatrix}\circledc P_{\pi_{1}^2}
    =
    \begin{pmatrix}
        \begin{bmatrix}
            0\\ 
            0
        \end{bmatrix}&\begin{bmatrix}
            0&0\\
            0&0
        \end{bmatrix}&\begin{bmatrix}
            a_{11}&a_{12}&a_{13}\\
            a_{21}&a_{22}&a_{23}
        \end{bmatrix} \\[15pt]
     \begin{bmatrix}
            b
        \end{bmatrix} &\begin{bmatrix}
            0 &0
        \end{bmatrix}&  \begin{bmatrix}
            0&0&0
        \end{bmatrix}\\[15pt]
      \begin{bmatrix}
            0\\
            0
        \end{bmatrix}&\begin{bmatrix}
            c_{11} &c_{12}\\
            c_{21} &c_{22}
        \end{bmatrix}&   \begin{bmatrix}
            0&0&0\\
            0&0&0
        \end{bmatrix}     
\end{pmatrix}.
    \]
    }
    $\hfill{\diamond}$
\end{example}
\vspace{0.7cm}

\medskip
We will now focus specifically on conformal permutation matrices in the following form:

For the special case when, 
\begin{equation} \label{pis}\pi_s(k)=k+s\,  (\mbox{\rm{mod}}\,n), \end{equation}

 we denote the conformal permutation matrix $D\circledc P_{\pi_s}$ by $H_{\pi_s}$, this is
\begin{equation}\label{ciclic conformal definition}
    H_{\pi_s}=
    \begin{cases}
        h_i, & \text{if } j=\pi_s(i),\\
        0, & \text{if } j\neq\pi_s(i),
    \end{cases}
\end{equation}
where each \(h_i\) is a nonzero \(m_i\times m_{\pi_s(i)}\) matrix over
\(\mathbb{K}\).
\medskip
\begin{remark}\label{prod not conformal}
 Note that the $\ell-$th power of the matrix $H_{\pi_s}$ is $(H_{\pi_s})^{\ell}=\underbrace{H_{\pi_s}\cdots H_{\pi_s}}_\text{$\ell$-times} $. However, this is different from calculating the $\ell-$conformal power of $H_{\pi_s}$, which is $D\circledc P^\ell_{\pi_s}.$
\end{remark}
\medskip

\begin{example}\label{example conformal perm 1} Let $\pi_1= (1\, 2\, 3)$ and $D=\diag(h_1, h_2, h_3),$ with $h_1=\begin{bmatrix}
            a_{11}&a_{12}\\
            a_{21}&a_{22}
        \end{bmatrix},$ $h_2=\begin{bmatrix}
            b_{11}&b_{12}&b_{13}\\
            b_{21}&b_{22}&b_{23}
        \end{bmatrix}, h_3=\begin{bmatrix}
            c_{11}&c_{12}\\
            c_{21}&c_{22}\\
            c_{31}&c_{32}
        \end{bmatrix}.$ Then, 
    \[
    H_{\pi_1}=D \circledc P_{\pi_1} =\begin{pmatrix}
        \begin{bmatrix}
            0&0\\
            0&0
        \end{bmatrix}&\begin{bmatrix}
            a_{11}&a_{12}\\
            a_{21}&a_{22}
        \end{bmatrix} & \begin{bmatrix}
            0&0&0\\ 
            0&0&0
        \end{bmatrix}\\[15pt]
     \begin{bmatrix}
            0&0 \\
            0&0 
        \end{bmatrix}&  \begin{bmatrix}
            0&0\\
            0&0
        \end{bmatrix} &\begin{bmatrix}
            b_{11}&b_{12}&b_{13}\\
            b_{21}&b_{22}&b_{23}
        \end{bmatrix} \\[15pt]
      \begin{bmatrix}
            c_{11}&c_{12}\\
            c_{21}&c_{22}\\
            c_{31}&c_{32}
        \end{bmatrix}&   \begin{bmatrix}
            0&0\\
            0&0\\
            0&0
        \end{bmatrix} &\begin{bmatrix}
            0&0&0\\
            0&0&0\\
            0&0&0
        \end{bmatrix}
    \end{pmatrix}:=\begin{pmatrix}
        0  & h_1 &0\\
        0  & 0   &h_2\\
        h_3& 0   &0
    \end{pmatrix}
    \]
    \medskip

\[
(H_{\pi_1})^2=\begin{pmatrix}
    0& 0& h_1h_2\\
    h_2h_3&0&0\\
    0& h_3h_1&0
\end{pmatrix}.
\]

    \[
     (H_{\pi_1})^3=\begin{pmatrix}
        h_1h_2h_3& 0       &0\\
           0     &h_2h_3h_1&0\\
           0     &  0      &h_3h_1h_2
    \end{pmatrix}.
    \]
    \medskip
 
The product is possible because the matrices $h_1$, $h_2$ and $h_3$, are of orders $m_1\times m_2$, $m_2\times m_3$ and $m_3\times m_1$, respectively. 
We note that the $2-$conformal power of $H_{\pi_1}$ is 
    \[
    H_{\pi^2_1}= \begin{pmatrix}
     h_1&0&0\\
     0&h_2&0\\
     0&0&h_3
    \end{pmatrix}\circledc 
    \begin{pmatrix}
        0&0&1\\
        1&0&0\\
        0&1&0
    \end{pmatrix}=\begin{pmatrix}
        0&0&h_1\\
        h_2&0&0\\
        0&h_3&0
    \end{pmatrix}
    \]
    Thus, $(H_{\pi_{1}})^\ell\neq H_{\pi_{1}^\ell}$. $\hfill{\diamond}$
\end{example}
\medskip

\begin{remark} It is worth noticing that when $\pi_s$ is as in \eqref{pis}, 
\[
(H_{\pi_s})_{i,j}\neq 0
\quad \Longleftrightarrow \quad
j\equiv i+s \pmod{n},
\]
where the nonzero entry is $h_i$. Then, for every $\ell \geq 1$, the nonzero entries of $(H_{\pi_s} )^{\ell}$ are
products of the form
\[
h_i h_{i+s}h_{i+2s}\cdots h_{i+(\ell-1)s},
\]
where the indices are taken modulo $n$. Let 
\begin{equation}
    W_{\pi_s}=\diag(h_1 h_{1+s}h_{1+2s}\cdots h_{1+(\ell-1)s},\ldots\ldots,h_n h_{n+s}h_{n+2s}\cdots h_{n+(\ell-1)s})\circledc P_{\pi_s}
\end{equation}
Thus, the following relationship between the $\ell-$th power of $H_{\pi_s}$ and the $\ell-$conformal power of $W_{\pi_s}$ follows:
\[
(H_{\pi_s})^\ell=W_{\pi_s^\ell}.
\]
\end{remark}

Throughout the text we will consider consider \begin{equation} \label{orderd} d=\mathcal{O}(\pi_s)=\frac{n}{\gcd (s,n)}.\end{equation}

\begin{lemma}\label{Def Dj matrices}
Let  $i=1,\ldots,n$, $\pi_s$ as in \eqref{pis} and $d$ as in \eqref{orderd}.
 Then $(H_{\pi_s})^{d}=\diag(D_1,\ldots,D_n)$, where 
\begin{equation} 
D_{i}=\prod_{\ell=0}^{d-1}h_{\pi_s^{\ell}(i)}\in M_{m_i,m_{i}}(\mathbb{K}).
\end{equation}
\end{lemma}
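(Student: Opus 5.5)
We prove the stronger statement that, for every $\ell\geq 1$, the only nonzero block of $(H_{\pi_s})^{\ell}$ in block row $i$ sits in block column $\pi_s^{\ell}(i)$, and that block equals
\[
h_i h_{\pi_s(i)} h_{\pi_s^{2}(i)}\cdots h_{\pi_s^{\ell-1}(i)}=\prod_{k=0}^{\ell-1}h_{\pi_s^{k}(i)},
\]
a matrix of order $m_i\times m_{\pi_s^{\ell}(i)}$. The lemma is then the case $\ell=d$.

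We argue by induction on $\ell$, using the block multiplication rule of Section 2. The partitions involved have sub-order conformity: every block row and every block column of $H_{\pi_s}$ is indexed by the same sub-orders $m_1,\ldots,m_n$.

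1. \emph{Base case.} For $\ell=1$ the claim is the definition \eqref{ciclic conformal definition}. Block row $i$ contains only $h_i$, which lies in block column $\pi_s(i)$ and has order $m_i\times m_{\pi_s(i)}$.

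2. \emph{Inductive step.} Write $(H_{\pi_s})^{\ell+1}=(H_{\pi_s})^{\ell}H_{\pi_s}$. Its $(i,j)$ block is
\[
\sum_{t}\bigl((H_{\pi_s})^{\ell}\bigr)[m_i,m_t]\,H_{\pi_s}[m_t,m_j].
\]
By the inductive hypothesis, the only possibly nonzero term has $t=\pi_s^{\ell}(i)$. The second factor $H_{\pi_s}[m_t,m_j]$ is nonzero only when $j=\pi_s(t)=\pi_s^{\ell+1}(i)$, and then it equals $h_{\pi_s^{\ell}(i)}$. The $(i,j)$ block is therefore
\[
\prod_{k=0}^{\ell-1}h_{\pi_s^{k}(i)}\cdot h_{\pi_s^{\ell}(i)},
\]
which is the claim for $\ell+1$. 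The orders chain correctly because $h_{\pi_s^{k}(i)}$ is $m_{\pi_s^{k}(i)}\times m_{\pi_s^{k+1}(i)}$. This is the same computation as in the Remark preceding the statement, now made explicit.

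3. \emph{The power $d$.} Since $\pi_s^{\ell}(i)\equiv i+\ell s \pmod n$, we have $\pi_s^{\ell}=\mathrm{id}$ exactly when $n\mid \ell s$. Dividing through by $\gcd(s,n)$, this holds exactly when $\tfrac{n}{\gcd(s,n)}$ divides $\ell\,\tfrac{s}{\gcd(s,n)}$. Because $\tfrac{n}{\gcd(s,n)}$ and $\tfrac{s}{\gcd(s,n)}$ are coprime, this is equivalent to $d\mid \ell$. So $\pi_s$ has order exactly $d$ from \eqref{orderd}, and in particular $\pi_s^{d}(i)=i$ for every $i$.

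4. \emph{Conclusion.} With $\ell=d$, the only nonzero block in block row $i$ of $(H_{\pi_s})^{d}$ is the diagonal block
\[
D_i=\prod_{k=0}^{d-1}h_{\pi_s^{k}(i)},
\]
of order $m_i\times m_{\pi_s^{d}(i)}=m_i\times m_i$. Hence $(H_{\pi_s})^{d}=\diag(D_1,\ldots,D_n)$.

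The main obstacle is notational rather than mathematical. The product $\prod_{\ell=0}^{d-1}$ in the statement must be read as an ordered product, taken left to right in increasing $\ell$, and the block indices must be read modulo $n$ with representatives in $\{1,\ldots,n\}$. Some $D_i$ may be singular because the $h_i$ are rectangular, but this does not affect the argument.
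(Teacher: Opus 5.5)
Your proof is correct and follows the paper's route. The paper writes $(H_{\pi_s})^d = W_{\pi_s^d} = \diag(D_1,\ldots,D_n)\circledc P_{\pi_s}^d$ using the preceding Remark, then applies $P_{\pi_s}^d=I_n$; your induction proves that Remark explicitly (which the paper only asserts), and your step 3 supplies $\pi_s^d=\mathrm{id}$. One wording fix: state the inductive claim as ``every block of row $i$ outside column $\pi_s^{\ell}(i)$ is zero'' rather than ``the only nonzero block,'' since products of nonzero rectangular $h_i$ can themselves be zero.
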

\begin{proof}
 It is clear that each $D_i, i=1, \ldots, n$, is a matrix of order $m_i\times m_i.$ From the relation described above, 
 \begin{eqnarray*}
 (H_{\pi_s})^d&=&W_{\pi_s^d}\\
             &=&\diag(h_1 h_{1+s}h_{1+2s}\cdots h_{1+(d-1)s},\ldots\ldots,h_n h_{n+s}h_{n+2s}\cdots h_{n+(d-1)s})\circledc P^d_{\pi_s}\\
             &=&\diag(D_1,\ldots,D_n)\circledc P^d_{\pi_s}
 \end{eqnarray*}
 and since $P^d_{\pi_s}=I_n$ the result follows.
\end{proof}
\medskip

\begin{lemma}\label{realtion D_j} 
Let $\pi_s$ as in \eqref{pis} with order $d$ where $\gcd(s,n)=1$,  and the matrix $\diag(D_1,\ldots,D_n)$ where each $D_i \in M_{m_i, m_i}(\mathbb{K})$ is as in Lemma \ref{Def Dj matrices}. Consider $H_i=\prod\limits_{k=0}^{n-2}h_{\pi^k_s(i)}$. Then,
        \[
         H_iD_{i+n_0}=D_i H_i, \quad i=1,\ldots, n \pmod{n}
        \]
        where $n_0=n-s.$
\end{lemma}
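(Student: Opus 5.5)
The plan is to write both sides as explicit products of the blocks $h_k$ and show that they are the same word. Since $\gcd(s,n)=1$, the order in \eqref{orderd} is $d=n$. Lemma~\ref{Def Dj matrices} then gives
\[
D_i=h_i h_{i+s}h_{i+2s}\cdots h_{i+(n-1)s}, \qquad H_i=h_i h_{i+s}\cdots h_{i+(n-2)s},
\]
with all indices taken modulo $n$, because $\pi_s^{k}(i)=i+ks \pmod n$. The whole argument is index bookkeeping modulo $n$.

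\textbf{Step 1 (rewrite the index).} First I rewrite $i+n_0=i+n-s\equiv i-s \pmod n$, so that $D_{i+n_0}=D_{i-s}$. I expand it with Lemma~\ref{Def Dj matrices}:
\[
D_{i-s}=h_{i-s}h_{i}h_{i+s}\cdots h_{i+(n-2)s}=h_{i-s}H_i .
\]
The last factor of this product is $h_{i-s+(n-1)s}=h_{i+(n-2)s}$.

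\textbf{Step 2 (split off the last factor of $D_i$).} Since $(n-1)s\equiv -s \pmod n$, the last factor of $D_i$ is $h_{i+(n-1)s}=h_{i-s}$. Hence
\[
D_i=H_i\,h_{i-s}.
\]

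\textbf{Step 3 (compare).} Combining the two steps gives
\[
H_iD_{i+n_0}=H_i\,h_{i-s}\,H_i=D_iH_i,
\]
using associativity of the matrix product.

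I would also check that every product is well defined, using that $h_k$ has order $m_k\times m_{k+s}$:
\begin{itemize}
\item $H_i$ has order $m_i\times m_{i+(n-1)s}=m_i\times m_{i-s}$;
\item $D_{i-s}$ is $m_{i-s}\times m_{i-s}$;
\item $D_i$ is $m_i\times m_i$.
\end{itemize}
So both sides are $m_i\times m_{i-s}$ matrices.

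The only delicate point, which I expect to be the main obstacle, is the modular index shift. One must confirm that the cyclic word $h_{i-s}h_i\cdots h_{i+(n-2)s}$ defining $D_{i-s}$ really starts at $h_{i-s}$ and ends at $h_{i+(n-2)s}$. This relies on the cycle $i, i+s, \dots, i+(n-1)s$ running through all residues exactly once, which is where $\gcd(s,n)=1$ is used.
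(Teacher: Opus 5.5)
Your proof is correct and is essentially the paper's argument: the paper also rewrites $D_{i+n_0}=h_{i+(n-1)s}h_ih_{i+s}\cdots h_{i+(n-2)s}$ and regroups the product by associativity. Your factorizations $D_{i-s}=h_{i-s}H_i$ and $D_i=H_ih_{i-s}$ simply make that regrouping explicit. One small correction to your closing remark: the congruence $(n-1)s\equiv -s \pmod n$ holds for every $s$, so $\gcd(s,n)=1$ is used only to get $d=n$, i.e.\ to ensure that each $D_i$ has exactly $n$ factors.
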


\begin{proof} If $\gcd(s,n)=1$ we have $d=n$ and then
\begin{align*}
   D_{i+n_0}&=h_{i+n_0}h_{\pi_s(i+n_0)}h_{\pi^2_s(i+n_0)}\cdots h_{\pi^{n-1}_s(i+n_0)}\\
   &=h_{i+n_0}h_{i+n_0+s}h_{i+n_0+2s}\cdots h_{i+n_0+(n-1)s}\\
   &=h_{i+(n-1)s}h_{i}h_{i+s}\cdots h_{i+(n-2)s} 
\end{align*}
Thus, 
\begin{align*}
    H_iD_{i+n_0}&=(h_ih_{\pi_s(i)}h_{\pi^2_s(i)}\cdots,h_{\pi^{n-2}_s(i)})( h_{i+n_0}h_{\pi_s(i+n_0)}h_{\pi^2_s(i+n_0)}\cdots h_{\pi^{n-1}_s(i+n_0)})\\
    &=(h_ih_{i+s}h_{i+2s}\cdots h_{i+(n-2)s} )(h_{i+(n-1)s}h_{i}h_{i+s}\cdots h_{i+(n-2)s})\\
    &=(h_ih_{i+s}h_{i+2s}\cdots h_{i+(n-2)s}h_{i+(n-1)s})(h_{i}h_{i+s}\cdots h_{i+(n-2)s})\\
    &=D_i H_i
\end{align*}
\end{proof}
\medskip
\begin{example}
    In Example~\ref{example conformal perm 1}, we have $D_1=h_1h_3h_2$; $D_2=h_2h_1h_3$, $D_3=h_3h_2h_1$ and $H_1=h_1h_3$; $H_2=h_2h_1$; $H_3=h_3h_2$. Therefore,
    \begin{align*}
        H_1D_2&=(h_1h_3)(h_2h_1h_3)=(h_1h_3h_2)(h_1h_3)=D_1 H_1\\
        H_2D_3&=(h_2h_1)(h_3h_2h_1)=(h_2h_1h_3)(h_2h_1)=D_2 H_2\\
        H_3D_1&=(h_3h_2)(h_1h_3h_2)=(h_3h_2h_1)(h_3h_2)=D_3 H_3.
    \end{align*}$\hfill{\diamond}$
\end{example}

\begin{remark}
    If $s|n$  and $\gcd(n,s)\neq 1$, then $n = \ell s$ for some positive $\ell$. 
    We then assume that $h_1,h_2,\ldots,h_{n}$ have appropriate order and we can construct the $\tilde{m}_j\times \tilde{m}_{j+1}$ matrices $$\tilde{h}_j = \diag(h_{1+(j-1)s}, \ldots, h_{s+(j-1)s}), j=1,2,\ldots,\ell.$$ 
    Thus, we define
    $$
\tilde{H}_{\pi_1}=\diag(\tilde{h}_1,\ldots,\tilde{h}_\ell)\circledc P_{\pi_1}$$ 
which is a conformal permutation matrix associated to $\pi_1,$ that can be reduced to the conditions of the previous Lemma~\ref{realtion D_j}
 \end{remark}

\begin{example}
    Let $n=4$, $s=2$, then $\gcd(s,n)=2$ and $d=2$. Consider $h_1, h_2, h_3, h_4$ matrices of orders $m_1 \times m_3, m_2 \times m_4, m_3 \times m_1, m_4 \times m_2, $, respectively. In this case, we have
    \[
    H_{\pi_2}=\begin{pmatrix}
        0&0&h_1&0\\
        0&0&0&h_2\\
        h_3&0&0&0\\
        0&h_4&0&0
    \end{pmatrix}
    \]
\[
\tilde{h}_1=\begin{pmatrix}
    h_1&0\\
    0&h_2
\end{pmatrix};\quad
\tilde{h}_2=\begin{pmatrix}
    h_3&0\\
    0&h_4
\end{pmatrix}
\]
\[
\tilde{H}_{\pi_1}=
\begin{pmatrix}
   0&\tilde{h}_1\\
   \tilde{h}_2&0
\end{pmatrix}.
\]
Thus, the study of $H_{\pi_2}$ can be reduced to $\tilde{H}_{\pi_1}$. $\hfill{\diamond}$
\end{example}
\medskip

\begin{theorem}\label{Eigenpars preserved} Consider the set of matrices 
$\{D_1, D_2, \dots, D_n\}$ as in Lemma~\ref{realtion D_j} and $\gcd(s,n)=1.$ Then the matrices $D_j$ will share an identical subset of eigenvalues of size at most $m_{n_0}$. Specifically, if $\lambda$ is an eigenvalue of $D_{n_0}$, it will also be an eigenvalue of any $D_j$, for all $j\neq n_0$.
 \end{theorem}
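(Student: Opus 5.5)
The plan is to reduce everything to the classical fact that $AB$ and $BA$ have the same nonzero eigenvalues, with the same algebraic multiplicities, whenever $A\in M_{p,q}(\mathbb K)$ and $B\in M_{q,p}(\mathbb K)$. This follows from the identity $\lambda^{q}\det(\lambda I_p-AB)=\lambda^{p}\det(\lambda I_q-BA)$. The intertwining relation $H_iD_{i+n_0}=D_iH_i$ of Lemma~\ref{realtion D_j} expresses the same structure, but I would work directly with the factorization it comes from.

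Since $\gcd(s,n)=1$ we have $d=n$, so by Lemma~\ref{Def Dj matrices}
\[
D_i=h_ih_{i+s}\cdots h_{i+(n-1)s}.
\]
Set $A_i=h_i\in M_{m_i,m_{i+s}}(\mathbb K)$ and $B_i=h_{i+s}\cdots h_{i+(n-1)s}\in M_{m_{i+s},m_i}(\mathbb K)$. Then $D_i=A_iB_i$. Because indices are taken mod $n$ and $i+ns\equiv i$, the product $B_iA_i=h_{i+s}\cdots h_{i+(n-1)s}h_i$ is exactly $D_{i+s}$. The fact above then gives that $D_i$ and $D_{i+s}$ have the same nonzero eigenvalues with the same multiplicities. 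Since $\gcd(s,n)=1$, the orbit of $n_0$ under $k\mapsto k+s \pmod n$ is all of $\{1,\dots,n\}$. Chaining these equalities along the orbit, every nonzero eigenvalue of $D_{n_0}$ is an eigenvalue of every $D_j$, and conversely.

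For the size bound, each $D_j$ factors through $\mathbb K^{m_k}$ for every $k$, so $\operatorname{rank}D_j\le \min_k m_k$. Hence the common nonzero part of the spectrum has at most $\min_k m_k\le m_{n_0}$ elements counted with multiplicity.

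The main obstacle is the eigenvalue $\lambda=0$, which the $AB$/$BA$ argument does not transfer. For example, with $h_1=[1\ 0]$ and $h_2=[1\ 0]^T$, the matrix $h_2h_1$ is singular while $h_1h_2=1$ is not. So I would read the statement with $n_0$ chosen so that $m_{n_0}=\min_k m_k$, which is consistent with the bound "at most $m_{n_0}$". Under that reading, suppose $0\in\sigma(D_{n_0})$. If $m_j>m_{n_0}$, then $\operatorname{rank}D_j\le m_{n_0}<m_j$, so $D_j$ is singular. If $m_j=m_{n_0}$, the nonzero eigenvalues of $D_j$ and $D_{n_0}$ agree with multiplicity and total fewer than $m_{n_0}=m_j$, so $0\in\sigma(D_j)$ as well. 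If that reading is not intended, I would state the theorem for the nonzero eigenvalues only and mention the counterexample above.
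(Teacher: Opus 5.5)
Your proof is correct, and it takes a different route from the paper. The paper works with eigenvectors. From the intertwining relation $H_jD_{j+n_0}=D_jH_j$ it pushes an eigenvector of $D_{n_0}$ around the orbit, $u_j=H_ju_{j+n_0}$, each time adding ``provided it is non-zero'' without ever checking it. You use the factorization behind that relation (your $B_i$ is $H_{i+s}$, so $D_i=h_iH_{i+s}$ and $D_{i+s}=H_{i+s}h_i$) together with $\lambda^{q}\det(\lambda I_p-AB)=\lambda^{p}\det(\lambda I_q-BA)$. This gives you three things:
\begin{itemize}
\item equality of the nonzero spectra, with algebraic multiplicities;
\item the sharper bound $\min_k m_k$ on the shared part (the bound $m_{n_0}$ in the statement is automatic);
\item a resolution of the paper's proviso.
\end{itemize}
For $\lambda\neq0$ the proviso holds automatically, since $h_{j-s}(H_ju)=D_{j+n_0}u=\lambda u\neq0$. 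For $\lambda=0$ it can fail. So the paper's proof has a hole exactly where you located one, and the literal statement with $n_0=n-s$ is false at $\lambda=0$. Your repair is therefore necessary, not optional: restrict to nonzero eigenvalues, or assume $m_{n_0}=\min_k m_k$, which matches the paper's later remark calling $D_{n_0}$ the smallest block. Your handling of $0$ under that hypothesis is right. In exchange, the paper's route produces explicit eigenvectors $H_ju_{j+n_0}$ of every $D_j$, which it uses in the next example to build $X$. You can recover these from your factorization, since $B_i$ maps the $\lambda$-eigenspace of $D_i$ injectively into that of $D_{i+s}$ when $\lambda\neq0$.

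One slip: as labelled, your example does not contradict the statement. For $n=2$, $s=1$ one has $n_0=n-s=1$, and $D_{n_0}=h_1h_2=1$ is the nonsingular product. Swap the factors: take $h_1=[1\ 0]^T$ and $h_2=[1\ 0]$, so $m_1=2$ and $m_2=1$. Then $D_{n_0}=D_1=\diag(1,0)$ has the eigenvalue $0$, while $D_2=[1]$ does not.
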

\begin{proof}
Let $n_0 = n-s$, and let $\lambda$ be an eigenvalue of $D_{n_0}$ with
associated eigenvector $u_{n_0}\neq0$, that is, $$D_{n_0}u_{n_0}=\lambda_{n_0}u_{n_0}.$$ 
By Lemma ~\ref{realtion D_j},  $H_nD_{n+n_0}=D_nH_n$ thus,
\[
D_n(H_n u_{n_0})=H_n D_{n+n_0} u_{n_0}=H_n D_{n_0}u_{n_0}=\lambda_{n_0}(H_n u_{n_0}).
\]
Therefore, $u_n=H_n u_{n_0}$ is an eigenvector of $D_n$ (provided it is non-zero) associated to the eigenvalue $\lambda_{n_0}.$ Furthermore, if $u_{j+n_0}\neq 0$ is such that $D_{j+n_0}u_{j+n_0}=\lambda_{n_0}u_{j+n_0},$ then
\[
D_j(H_ju_{j+n_0})=H_j D_{j+n_0}u_{j+n_0}=H_j(\lambda_{n_0}u_{j+n_0})=\lambda_{n_0}(H_ju_{j+n_0}).
\]
Thus, $u_j=H_ju_{j+n_0}$, $j\neq n_0$ is an eigenvector of the matrices $D_j$ associated to the eigenvalue $\lambda_{n_0}.$ 
\end{proof}
\begin{example}
    In Example~\ref{example conformal perm 1}, $n=3$, $s=1$ and $n_0=2.$ Furthermore, the matrices $D_j, j=1,2,3$ in Lemma \ref{Def Dj matrices} are:
    \begin{align*}
        D_1&=h_1h_2h_3;& D_2&=h_2h_3h_1; &D_3=h_3h_1 h_2,
    \end{align*}
    \begin{align*}
        H_1&=h_1h_2;& H_2&=h_2h_3; &H_3=h_3h_1,
    \end{align*}
which are $2\times2$, $2\times 3$ and $3\times 2$ block matrices, respectively. 

If $(\lambda_i^{(2)},v_i^{(2)}), i=1,2,$ are eigenpairs of $D_2$ then, by Theorem~\ref{Eigenpars preserved}, $(\lambda_i^{(2)}, H_3 v_i^{(2)}),$ $i= 1,2$, are eigenpairs of $D_3$ and, $(\lambda_i^{(2)},H_1H_3v_i^{(2)}), i=1,2,$ are eigenpairs of $D_1,$    provided the vectors $H_3 v_i^{(2)}$, $H_1H_3v_i^{(2)}$ are nonzero.
  \medskip
  Therefore, if $(\lambda^{(3)}, v^{(3)})$ is the third eigenpair of $D_3$, the matrix 
  $$
  X=\small{\begin{pmatrix}
      [H_1H_3v_1^{(2)}]^T&[H_1H_3v_2^{(2)})]^T&0&0&0&0&0\\
      0&0&[v_1^{(2)}]^T&[v_2^{(2)}]^T&0&0&0\\
      0&0&0&0&[H_3v_1^{(2)}]^T&[H_3v_2^{(2)}]^T&[v^{(3)}]^T
  \end{pmatrix}}
  $$
 satisfy,
 \[
 (H_{\pi_1})^3=X\diag\left(\lambda_1^{(2)},\lambda_2^{(2)},\lambda_1^{(2)},\lambda_2^{(2)},\lambda_1^{(2)},\lambda_2^{(2)},\lambda^{(3)}\right)X^T.
 \]$\hfill{\diamond}$
\end{example}

\begin{remark} Assuming $m_1 \leq m_2 \leq \cdots \leq m_n$, the smallest block, $D_{n_0}$ in Theorem \ref{Eigenpars preserved}, serves as the base of the recursive isospectral reduction process associated with $(H_{\pi_s})^d$. This process transforms the original eigenvalue problem into an equivalent sequence of lower-dimensional problems while preserving the relevant spectral information. As a result, it provides a useful framework for investigating spectral stability, singularity, and controllability properties of $(H_{\pi_s})^d$ \cite{Buni2014,Reis}.\end{remark}

\subsection{Diagonalization of the conformal permutation matrix $H_{\pi_s}$}
In what follows we diagonalize the matrix $H_{\pi_s}$. Before doing so, we introduce some useful notation. 

\noindent Let $\left[ I[m_i, m_j]\right]$ be the matrix such that:
\[
I[m_i,m_j]
=
\begin{cases}
I_{m_i}, & \text{if } i=j,\\[2mm]
0_{m_i\times m_j}, & \text{if } i\neq j,
\end{cases}
\] 
Consider $D^{(\omega)}= \diag(\omega^0,\omega^1,\ldots,\omega^{n-1})$, $\omega^n=1$ and $X_n= \left[ I[m_i, m_i]\right]\circledc  D^{(\omega)}.$

\begin{lemma} Let $1\le s\le n-1$ and $\pi_s(k)=k+s\,  (\rm{mod}\,n)$ and $D^{(\omega)}$ as written above. Consider $H_{\pi_s}$ as in Equation \ref{ciclic conformal definition}.\\

Then, 
    \[
    X^\ell_n H_{\pi_s}X_n^{-\ell}=\omega^{-s\ell} H_{\pi_s},\quad \ell=1,\ldots,n-1.
    \]
\end{lemma}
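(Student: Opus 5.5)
The plan is a direct blockwise computation, once $X_n$ has been written explicitly. By Definition~\ref{Def conf prod}, the $(i,j)$ block of $X_n=\left[I[m_i,m_i]\right]\circledc D^{(\omega)}$ is $\sum_t I[m_i,m_t]\,(D^{(\omega)})_{tj}=I[m_i,m_j]\,\omega^{j-1}$. This is $\omega^{i-1}I_{m_i}$ when $j=i$ and zero otherwise. Hence
\[
X_n=\diag\bigl(\omega^{0}I_{m_1},\omega^{1}I_{m_2},\ldots,\omega^{n-1}I_{m_n}\bigr).
\]
Since $\omega\neq 0$, $X_n$ is invertible. Powers of a block diagonal matrix with scalar blocks are computed blockwise, so for every integer $\ell$ (negative ones included)
\[
X_n^{\ell}=\diag\bigl(\omega^{(i-1)\ell}I_{m_i}\bigr)_{i=1}^n .
\]
One could also get this from Remark~\ref{operations A-conformal matrix}(a), since $\left[I[m_i,m_j]\right]$ is idempotent.

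Next I conjugate block by block. Multiplying $H_{\pi_s}$ on the left by a block diagonal matrix scales its block rows, and multiplying on the right scales its block columns. So the $(i,j)$ block of $X_n^{\ell}H_{\pi_s}X_n^{-\ell}$ is
\[
\omega^{(i-1)\ell}\,(H_{\pi_s})_{ij}\,\omega^{-(j-1)\ell}=\omega^{(i-j)\ell}\,(H_{\pi_s})_{ij}.
\]
The scalar factors commute with the rectangular block $(H_{\pi_s})_{ij}$ of size $m_i\times m_j$, and the identity blocks on either side have exactly the sizes $m_i$ and $m_j$.

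By \eqref{ciclic conformal definition}, the only nonzero blocks are those with $j\equiv i+s \pmod n$, where the block equals $h_i$. Then $i-j$ is either $-s$ (when $i+s\le n$) or $n-s$ (when the index wraps around). Because $\omega^n=1$, both cases give $\omega^{(i-j)\ell}=\omega^{-s\ell}$. Every nonzero block is therefore multiplied by the same scalar $\omega^{-s\ell}$, and the zero blocks stay zero, which yields $X_n^{\ell}H_{\pi_s}X_n^{-\ell}=\omega^{-s\ell}H_{\pi_s}$.

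There is no real obstacle here. The only points needing care are reading off $X_n$ correctly from the conformal product and treating the wrap-around case $j=i+s-n$ in the exponent. The argument uses only $\omega^n=1$, not primitivity, and it holds for every $\ell$, not just $1\le\ell\le n-1$.
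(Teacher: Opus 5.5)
Your proof is correct and follows the paper's argument: you write $X_n$ as the block-diagonal matrix $\diag(\omega^{i-1}I_{m_i})$, conjugate blockwise, and use $j\equiv i+s \pmod n$ together with $\omega^n=1$ to show that every nonzero block picks up the same scalar. The only differences are cosmetic: you treat $X_n^{\ell}$ directly instead of proving the case $\ell=1$ and iterating, and you spell out the wrap-around case $i-j=n-s$, which the paper leaves implicit.
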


\begin{proof}
Let $X_n =\omega^0I_{m_1} \oplus  \omega^{1}I_{m_2} \oplus \cdots \omega^{n-1}I_{m_s}$,
and  \[
(H_{\pi_s})_{ij}\neq 0 \quad \textit{when} \quad j\equiv i+s \pmod n.
\]
Then, since $X_n$ is a block-diagonal invertible matrix, the block diagonal entries of $X_n$ and its inverse are given, respectively, by
\[
(X_n)_{ii}=\omega^{i-1}I_{m_i}, \qquad
(X_n^{-1})_{ii}=\omega^{-(i-1)}I_{m_i},
\]
for $i=1,\ldots,n$. Therefore, for every pair $(i,j)$,
\[
(X_n H_{\pi_s} X_n^{-1})_{ij}
=
(X_n)_{ii}\,h_{ij}\,(X_n^{-1})_{jj}
=
\omega^{i-1}h_{ij}\omega^{-(j-1)}.
\]

If $h_{ij}\neq 0$, then by hypothesis $ j\equiv i+s \pmod n.$
Hence, $j-1\equiv i+s-1 \pmod n,$ and consequently,
\[
\omega^{i-1}\omega^{-(j-1)}
=
\omega^{\,i-1-(j-1)}
=
\omega^{-s}.
\]

Thus, every nonzero entry of $X_n H_{\pi_s}X_n^{-1}$ is multiplied by the same factor $\omega^{-s}$, namely
\[
(X_n H_{\pi_s}X_n^{-1})_{ij}
=
\omega^{-s}h_{ij}.
\]

As the null entries remain zero, it follows that
\[
X_n H_{\pi_s}X_n^{-1}
=
\omega^{-s}H_{\pi_s},
\]
and the result follow.
\end{proof}

\begin{theorem}
If $(\lambda,v)$ is an eigenpair of $H_{\pi_s}$, then the eigenpairs of the matrix $H_{\pi_s}$ are $(\lambda\omega^0,v),(\lambda\omega^1,X_nv), \ldots,(\lambda\omega^{n-1},X^\ell_nv),$ $\ell=1,\ldots,n-1.$ 
\end{theorem}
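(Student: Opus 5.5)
The plan is to read the preceding lemma as an intertwining relation between $H_{\pi_s}$ and the invertible block-scalar matrix $X_n$, and then push an eigenvector through powers of $X_n$. First I would note that the lemma extends from $\ell=1,\ldots,n-1$ to every integer $\ell$. Indeed, $X_n=\omega^0 I_{m_1}\oplus\omega^1 I_{m_2}\oplus\cdots\oplus\omega^{n-1}I_{m_n}$ and $\omega^n=1$ give $X_n^n=I$. Also, conjugation by $X_n$ multiplies $H_{\pi_s}$ by the scalar $\omega^{-s}$, and iterating this gives the case of a general $\ell$. Replacing $\ell$ by $-\ell$ and rearranging, I would obtain the commutation rule
\[
H_{\pi_s}X_n^{\ell}=\omega^{s\ell}\,X_n^{\ell}H_{\pi_s},\qquad \ell=0,1,\ldots,n-1 .
\]

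Next, let $(\lambda,v)$ be an eigenpair of $H_{\pi_s}$, so $H_{\pi_s}v=\lambda v$ with $v\neq 0$. Applying the rule gives
\[
H_{\pi_s}(X_n^{\ell}v)=\omega^{s\ell}X_n^{\ell}H_{\pi_s}v=\lambda\,\omega^{s\ell}\,(X_n^{\ell}v).
\]
Since $X_n$ is invertible, $X_n^{\ell}v\neq 0$. Hence $(\lambda\omega^{s\ell},X_n^{\ell}v)$ is an eigenpair for each $\ell=0,\ldots,n-1$, and the case $\ell=0$ returns the original pair $(\lambda,v)$.

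The step I expect to be the main obstacle is matching this with the indexing in the statement, which pairs $\lambda\omega^{\ell}$ with $X_n^{\ell}v$. That matching is literally correct when $s=1$. For general $s$ with $\gcd(s,n)=1$ (the setting of Theorem~\ref{Eigenpars preserved}), the map $\ell\mapsto s\ell \pmod n$ is a bijection of $\mathbb{Z}_n$. So the set $\{\lambda\omega^{s\ell}\}_{\ell=0}^{n-1}$ is exactly $\{\lambda\omega^{k}\}_{k=0}^{n-1}$, and $\lambda\omega^{k}$ has eigenvector $X_n^{\ell_k}v$, where $\ell_k\equiv s^{-1}k \pmod n$. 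I would state the result in this reindexed form. When $\gcd(s,n)>1$, I would either reduce to this case via the construction of $\tilde H_{\pi_1}$ in the preceding remark, or state that only the $d$-th roots $\omega^{s\ell}$ arise.

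Finally, I would remark that if $\lambda\neq0$ the $n$ eigenvalues $\lambda\omega^{k}$ are pairwise distinct, so the vectors $X_n^{\ell}v$ are linearly independent. If $\lambda=0$, all of them collapse to the single eigenvalue $0$.
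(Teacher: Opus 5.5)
Your argument is the paper's argument: the preceding lemma gives $X_n^{\ell}H_{\pi_s}X_n^{-\ell}=\omega^{-s\ell}H_{\pi_s}$, and $X_n$ is invertible, so $X_n^{\ell}v\neq 0$. Your observation that $X_n^{\ell}v$ is in fact an eigenvector for $\lambda\omega^{s\ell}$ is correct, and the paper's one-line proof glosses over it: the pairing $(\lambda\omega^{\ell},X_n^{\ell}v)$ in the statement holds verbatim only for $s=1$, needs the reindexing $\ell\mapsto s\ell \pmod n$ when $\gcd(s,n)=1$, and gives only $\lambda$ times the $d$-th roots of unity when $\gcd(s,n)>1$.
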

\begin{proof}
    Note that, recalling the definition of $X_n$, we have $X^{\ell}_nv\neq 0$ whenever $v\neq0.$ Thus, the result is an immediate consequence of the lemma above.
\end{proof}
The next theorem shows that the matrix $H_{\pi_1}$ is block diagonalizable via a conformal product of two matrices that involve the Discrete Fourier matrix and a diagonal block matrix obeying certain conditions.

\begin{theorem}\label{diagonalization H} Let $\{h_k\}$ be a family of $m_k\times m_{k+1}$ rectangular matrices with $m_{n+1}=m_1$, and $\gamma$ be an invertible and diagonalizable matrix satisfying
\begin{equation}\label{gamma_n}
\gamma^n=h_1h_2\cdots h_n.
\end{equation}
Let $\Lambda_{\gamma}=\diag(\gamma,\omega\gamma,\ldots,\omega^{n-1}\gamma)$, where $\omega$ is a primitive $n$-th root of unity and for $i=1,\ldots,n$,

\begin{equation}\label{gamma_i} \gamma_i=h_{n-i+1}\gamma_{i-1}\gamma^{-1},\end{equation}
 with $\gamma_0=I_{m_1}$. Then, when $m_1\cdot n=\sum\limits_{i=1}^n m_i$
\begin{equation}
    H_{\pi_1}=F_{\pi_1}\Lambda_{\gamma}F^{-1}_{\pi_1}.
\end{equation}
Where, $F_{\pi_1}=\diag(\gamma_n,\gamma_{n-1},\ldots,\gamma_1)\circledc F$, with $F$ the discrete Fourier transform. Thus, $H_{\pi_1}$ is a block-diagonalizable matrix.
\end{theorem}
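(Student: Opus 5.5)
The plan is to prove the intertwining identity $H_{\pi_1}F_{\pi_1}=F_{\pi_1}\Lambda_{\gamma}$ block by block, and then to settle separately when $F_{\pi_1}$ can be inverted.

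\textbf{Step 1: sizes and a closed form for the $\gamma_k$.} Since $h_n$ is $m_n\times m_1$, induction on \eqref{gamma_i} gives that $\gamma_k$ is an $m_{n-k+1}\times m_1$ matrix. Unrolling the recursion gives
\[
\gamma_k=h_{n-k+1}h_{n-k+2}\cdots h_n\,\gamma^{-k},\qquad k=0,1,\ldots,n.
\]
The case $k=n$ together with \eqref{gamma_n} yields $\gamma_n=I_{m_1}$.

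\textbf{Step 2: block form of $F_{\pi_1}$.} By Definition~\ref{Def conf prod}, the $(i,j)$ block of $F_{\pi_1}$ is
\[
\tfrac{1}{\sqrt n}\,\omega^{(i-1)(j-1)}\gamma_{n-i+1},
\]
which has size $m_i\times m_1$. Equivalently,
\[
F_{\pi_1}=G\,(F\otimes I_{m_1}),\qquad G=\gamma_n\oplus\gamma_{n-1}\oplus\cdots\oplus\gamma_1 .
\]
The hypothesis $m_1 n=\sum_i m_i$ is exactly what makes $F_{\pi_1}$ square.

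\textbf{Step 3: block-row comparison.} Row block $i$ of $H_{\pi_1}$ has $h_i$ in column block $i+1$ (indices taken mod $n$).

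For $i\le n-1$, the $(i,j)$ block of $H_{\pi_1}F_{\pi_1}$ is
\[
\tfrac{1}{\sqrt n}\,\omega^{i(j-1)}h_i\gamma_{n-i}.
\]
The $(i,j)$ block of $F_{\pi_1}\Lambda_\gamma$ is
\[
\tfrac{1}{\sqrt n}\,\omega^{(i-1)(j-1)}\omega^{j-1}\gamma_{n-i+1}\gamma=\tfrac{1}{\sqrt n}\,\omega^{i(j-1)}\gamma_{n-i+1}\gamma .
\]
These agree because \eqref{gamma_i} with $k=n-i+1$ reads $\gamma_{n-i+1}\gamma=h_i\gamma_{n-i}$.

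For $i=n$, the left side is $\tfrac{1}{\sqrt n}h_n\gamma_n=\tfrac{1}{\sqrt n}h_n$, using $\gamma_n=I$ from Step 1. The right side is $\tfrac{1}{\sqrt n}\,\omega^{n(j-1)}\gamma_1\gamma=\tfrac{1}{\sqrt n}h_n$, since $\omega^n=1$ and $\gamma_1=h_n\gamma^{-1}$. This closes the cycle and proves $H_{\pi_1}F_{\pi_1}=F_{\pi_1}\Lambda_\gamma$.

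\textbf{Main obstacle: invertibility of $F_{\pi_1}$.} Passing to $H_{\pi_1}=F_{\pi_1}\Lambda_\gamma F_{\pi_1}^{-1}$ requires $F_{\pi_1}$ to be invertible. Since $F\otimes I_{m_1}$ is unitary, this is equivalent to invertibility of $G$.

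A block-diagonal matrix with rectangular blocks $\gamma_k$ has rank at most $\sum_k\min(m_{n-k+1},m_1)$. Hence $G$ is invertible only when every $\gamma_k$ is square and invertible. By Step 1 this happens precisely when all $m_i=m_1$ and every $h_i$ is invertible, and in that case the condition $m_1n=\sum m_i$ holds automatically.

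So the plan is to record this explicitly, reading the dimension hypothesis together with invertibility of $G$ (equivalently, $F_{\pi_1}$ nonsingular) as the operative assumption. Under it, $F_{\pi_1}^{-1}=(F^{-1}\otimes I_{m_1})\,G^{-1}$. Multiplying the intertwining identity on the right by $F_{\pi_1}^{-1}$ gives the block diagonalization. Finally, since $\gamma$ is diagonalizable, $\Lambda_\gamma$ is diagonalizable as well, so $H_{\pi_1}$ is in fact diagonalizable.
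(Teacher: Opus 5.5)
Your Steps 1--3 follow the paper's proof. The paper also multiplies $H_{\pi_1}$ against the $i$-th block column $(\gamma_n,\gamma_{n-1}\omega^i,\ldots,\gamma_1\omega^{(n-1)i})^T$. Using \eqref{gamma_i} and $\gamma_n=I_{m_1}$, it obtains that column times $\gamma\omega^i$, which is the identity $H_{\pi_1}F_{\pi_1}=F_{\pi_1}\Lambda_\gamma$. The paper stops there and never checks that $F_{\pi_1}$ is invertible. Your extra step therefore fills a real omission in the published argument.

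You do not, however, need invertibility of $G$ as an extra \emph{operative assumption}; it follows from the stated hypotheses.
\begin{enumerate}
\item Since $\gamma$ is invertible, $h_1h_2\cdots h_n=\gamma^n$ has rank $m_1$.
\item This product factors through spaces of dimensions $m_2,\ldots,m_n$, so its rank is at most $\min_i m_i$. Hence $m_i\ge m_1$ for every $i$.
\item Combined with $m_1 n=\sum_i m_i$, this forces $m_i=m_1$ for all $i$.
\item Each $h_i$ is then a square factor of the invertible matrix $\gamma^n$, so each $h_i$ is invertible.
\item By your Step 1, every $\gamma_k=h_{n-k+1}\cdots h_n\gamma^{-k}$ is invertible. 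So $G$ is invertible and $F_{\pi_1}=G(F\otimes I_{m_1})$ is nonsingular.
\end{enumerate}
Your rank-of-block-diagonal argument already showed that invertibility of $G$ is equivalent to equal sizes with invertible $h_i$. What was missing is only the observation that the hypotheses force this. With it, your proof establishes the theorem exactly as stated.

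It also shows that the hypotheses $\gamma$ invertible and $m_1n=\sum_i m_i$ rule out genuinely rectangular blocks. The theorem is therefore really about the equal-size case, which is worth saying explicitly.
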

\begin{proof}
\small{
    \begin{align*}
       H_{\pi_1}\begin{pmatrix}
                   \gamma_n\\
                   \gamma_{n-1}\omega^i\\
                   \gamma_{n-2}\omega^{2i}\\
                   \vdots\\
                   \gamma_{1}\omega^{(n-1)i}\\
                \end{pmatrix} &= \begin{pmatrix}
                    0&h_1&0&0&\cdots&0\\
                    0&0&h_2&0&\cdots&0\\
                    \vdots&\vdots&&\ddots&&0\\
                    0&0&\cdots&&\ddots&h_{n-1}\\
                    h_n&0&\cdots&\cdots&&0
                \end{pmatrix} 
                \begin{pmatrix}
                   I_{m_1}\\
                   h_2h_3\cdots h_{n-1}h_n\gamma^{-(n-1)}\omega^i\\
                   h_3h_4\cdots h_{n-1}h_n\gamma^{-(n-2)}\omega^{2i}\\
                   \vdots\\
                   h_n\gamma^{-1}\omega^{(n-1)i}\\
                \end{pmatrix}\\[6pt]
                &= \begin{pmatrix}
                   h_1h_2\cdots h_n\gamma^{-(n-1)}\omega^i\\
                   h_2h_3\cdots h_{n-1}h_n\gamma^{-(n-2)}\omega^{2i}\\
                  h_3h_4\cdots h_{n-1}h_n\gamma^{-(n-3)}\omega^{3i}\\
                   \vdots\\
                   h_n\\
                \end{pmatrix}\\[6pt]
              &=\begin{pmatrix}
                   \gamma^n\gamma^{-(n-1)}\omega^i\\
                   h_2h_3\cdots h_{n-1}h_n\gamma^{-(n-2)}\gamma^{-1}\gamma\omega^{2i}\\
                  h_3h_4\cdots h_{n-1}h_n\gamma^{-(n-3)}\gamma^{-1}\gamma\omega^{3i}\\
                   \vdots\\
                   h_n\gamma^{-1}\gamma\omega^{i}\omega^{(n-1)i}\\
                \end{pmatrix} \\[1pt]
                &=\begin{pmatrix}
                   I_{m_1}\\
                   h_2h_3\cdots h_{n-1}h_n\gamma^{-(n-1)}\omega^{i}\\
                  h_3h_4\cdots h_{n-1}h_n\gamma^{-(n-2)}\omega^{2i}\\
                   \vdots\\
                   h_n\gamma^{-1}\omega^{(n-1)i}\\
                \end{pmatrix}\circledc\gamma\omega^k\\[6pt]
                &=\begin{pmatrix}
                   \gamma_n\\
                   \gamma_{n-1}\omega^i\\
                   \gamma_{n-2}\omega^{2i}\\
                   \vdots\\
                   \gamma_{1}\omega^{(n-1)i}\\
                \end{pmatrix}\circledc\gamma\omega^i .\end{align*}}
\end{proof}

\begin{remark}\label{parcialmente diagonalizable}
    The following example shows that if $m_1\cdot n\neq\sum\limits_{i=1}^n m_i$ then by the previous theorem we will obtain exactly  $m_1\cdot n$ formulas for eigenvalues and eigenvectors of $H_{\pi_1}.$ 
\end{remark}

\begin{example}
We consider the following matrices,

\[
h_1=\begin{pmatrix}
    3&0&0\\
    0&5&0
\end{pmatrix}; \quad h_2=\begin{pmatrix}
    1&0&0\\
    0&1&0\\
    0&0&0
\end{pmatrix}; \quad h_3=\begin{pmatrix}
    1&0\\
    0&1\\
    0&0
\end{pmatrix}.
\]
Then,
\[
H_{\pi_1}=\begin{pmatrix}
    0&0&3&0&0&0&0&0\\
    0&0&0&5&0&0&0&0\\
    0&0&0&0&0&1&0&0\\
    0&0&0&0&0&0&1&0\\
    0&0&0&0&0&0&0&0\\
    1&0&0&0&0&0&0&0\\
    0&1&0&0&0&0&0&0\\
    0&0&0&0&0&0&0&0
\end{pmatrix}.
\]
In this case,
\[
\gamma_1=\begin{pmatrix}
    3^{-\frac{1}{3}}&0\\
    0&5^{-\frac{1}{3}}\\
    0&0
\end{pmatrix};\,\,\gamma_2=\begin{pmatrix}
    3^{-\frac{2}{3}}&0\\
    0&5^{-\frac{2}{3}}\\
    0&0
\end{pmatrix};\,\,\gamma_3=\begin{pmatrix}
    1&0\\
    0&1
\end{pmatrix};\,\, \gamma^{-1}=\begin{pmatrix}
    3^{-\frac{1}{3}}&0\\
    0&5^{-\frac{1}{3}}
\end{pmatrix},
\]
and
\[
F_{\pi_1}=\begin{pmatrix}
    1&0&1&0&1&0\\
    0&1&0&1&0&1\\
    3^{-\frac{2}{3}}&0&3^{-\frac{2}{3}}\omega&0&3^{-\frac{2}{3}}\omega^2&0\\
    0&5^{-\frac{2}{3}}&0&5^{-\frac{2}{3}}\omega&0&5^{-\frac{2}{3}}\omega^2\\
    0&0&0&0&0&0\\
    3^{-\frac{2}{3}}&0&3^{-\frac{2}{3}}\omega^2&0&3^{-\frac{2}{3}}\omega&0\\
    0&5^{-\frac{2}{3}}&0&5^{-\frac{2}{3}}\omega^2&0&5^{-\frac{2}{3}}\omega\\
    0&0&0&0&0&0\\
\end{pmatrix}.
\]

Computing $H_{\pi_1}\begin{pmatrix}
    \gamma_3\\
    \gamma_2\\
    \gamma_1
\end{pmatrix}$, we verify that  $H_{\pi_1}\begin{pmatrix}
    \gamma_3\\
    \gamma_2\\
    \gamma_1
\end{pmatrix}=
\begin{pmatrix}
    \gamma_3\\
    \gamma_2\\
    \gamma_1
\end{pmatrix}\circledc \gamma$, more specifically, we have 
\begin{align*}
H_{\pi_1}\begin{pmatrix}
    \gamma_3\\
    \gamma_2\\
    \gamma_1
\end{pmatrix}&=
\begin{pmatrix}
    0&0&3&0&0&0&0&0\\
    0&0&0&5&0&0&0&0\\
    0&0&0&0&0&1&0&0\\
    0&0&0&0&0&0&1&0\\
    0&0&0&0&0&0&0&0\\
    1&0&0&0&0&0&0&0\\
    0&1&0&0&0&0&0&0\\
    0&0&0&0&0&0&0&0
\end{pmatrix}
\begin{pmatrix}
    1&0\\
    0&1\\
    3^{-\frac{2}{3}}&0\\
    0&5^{-\frac{2}{3}}\\
    0&0\\
    3^{-\frac{2}{3}}&0\\
    0&5^{-\frac{2}{3}}\\
    0&0\\
\end{pmatrix}\\
&=
\begin{pmatrix}
    1&0\\
    0&1\\
    3^{-\frac{2}{3}}&0\\
    0&5^{-\frac{2}{3}}\\
    0&0\\
    3^{-\frac{2}{3}}&0\\
    0&5^{-\frac{2}{3}}\\
    0&0\\
\end{pmatrix}\circledc 
\begin{pmatrix}
    3^{\frac{1}{3}}&0\\
    0&5^{\frac{1}{3}}
\end{pmatrix}.
\end{align*}

From this, we can conclude that
\[
\left( 3^{\frac{1}{3}}, (1,0,3^{-\frac{2}{3}},0,0,3^{-\frac{2}{3}},0,0)^T \right)\]
and
\[\left( 5^{\frac{1}{3}}, (0,1,0,5^{-\frac{2}{3}},0,0,5^{-\frac{2}{3}},0)^T \right)\]
are two eigenpairs of $H_{\pi_1}.$
Similarly, starting from 
\[H_{\pi_1}\begin{pmatrix}
    \gamma_3\\
    \gamma_2\omega\\
    \gamma_1\omega^2
\end{pmatrix}=
\begin{pmatrix}
    \gamma_3\\
    \gamma_2\omega\\
    \gamma_1\omega^2
\end{pmatrix}\circledc \gamma\omega\textit{ and }
H_{\pi_1}\begin{pmatrix}
    \gamma_3\\
    \gamma_2\omega^2\\
    \gamma_1\omega
\end{pmatrix}=
\begin{pmatrix}
    \gamma_3\\
    \gamma_2\omega^2\\
    \gamma_1\omega
\end{pmatrix}\circledc \gamma\omega^2,
\]
we can compute four eigenpairs of $H_{\pi_1}.$ Thus, we can compute six eigenvalues and eigenvectors of $H_{\pi_1}.$
\end{example} $\hfill{\diamond}$

\begin{coro} Let $\Lambda_{\gamma}=\diag(\gamma,\omega\gamma,\ldots,\omega^{n-1}\gamma)$.
   If $s|n$, $\gamma^n$, $\{h_i\}$ are as in Theorem \ref{diagonalization H}, and \begin{equation}
    H_{\pi_s}=F_{\pi_s}\Lambda_{\gamma} \otimes I_sF^{-1}_{\pi_s},
\end{equation}
then, $F_{\pi_s}=\diag(\gamma_n,\gamma_{n-1},\ldots,\gamma_1)F\otimes I_s$. Thus, $H_{\pi_s}$ is a block-diagonalizable matrix. 
\end{coro}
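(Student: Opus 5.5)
The plan is to reduce the case $s\mid n$ to Theorem~\ref{diagonalization H} by the regrouping device of the Remark preceding the example with $n=4$, $s=2$. Write $n=\ell s$. I will interpret the statement as follows. The cycle structure of $\pi_s$ splits $\{1,\dots,n\}$ into the $s$ residue classes $r, r+s, \dots, r+(\ell-1)s$ for $r=1,\dots,s$. Collecting block rows and block columns by these classes is effected by a block permutation matrix $Q$, which is orthogonal, so $Q^{-1}=Q^T$.

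The first step is to show that
\[
Q^T H_{\pi_s} Q=\bigoplus_{r=1}^{s} H^{(r)}_{\pi_1},
\]
where each $H^{(r)}_{\pi_1}$ is an $\ell$-cyclic conformal permutation matrix of the type treated in Theorem~\ref{diagonalization H}, with blocks $h_r, h_{r+s},\dots,h_{r+(\ell-1)s}$. Equivalently, I regroup as in the Remark, using $\tilde h_j=\diag(h_{1+(j-1)s},\dots,h_{s+(j-1)s})$. Then $H_{\pi_s}$ is permutation-similar to $\tilde H_{\pi_1}=\diag(\tilde h_1,\dots,\tilde h_\ell)\circledc P_{\pi_1}$. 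In this form the Kronecker factor $I_s$ records the $s$ parallel cycles. The verification is a direct check of which block positions are nonzero, using $(H_{\pi_s})_{ij}\neq0 \iff j\equiv i+s \pmod n$.

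The second step applies Theorem~\ref{diagonalization H} to $\tilde H_{\pi_1}$, with $\ell$ in place of $n$ and $\omega$ a primitive $\ell$-th root of unity. The hypothesis $\gamma^{\ell}=\tilde h_1\cdots\tilde h_\ell$ holds with $\gamma=\diag(\gamma^{(1)},\dots,\gamma^{(s)})$ whenever each cycle product admits an invertible diagonalizable $\ell$-th root $\gamma^{(r)}$. The dimension condition is required cycle by cycle. I then build $\tilde\gamma_i=\tilde h_{\ell-i+1}\tilde\gamma_{i-1}\gamma^{-1}$ as in \eqref{gamma_i}; since everything is block diagonal across cycles, these decompose as direct sums over $r$. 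This gives $\tilde H_{\pi_1}=\tilde F\,\Lambda_\gamma\,\tilde F^{-1}$ with $\tilde F=\diag(\tilde\gamma_\ell,\dots,\tilde\gamma_1)\circledc F$.

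The last step undoes the permutation $Q$ and rewrites the resulting factorization in the form stated in the corollary. The claimed form is $F_{\pi_s}=\diag(\gamma_n,\dots,\gamma_1)F\otimes I_s$, with middle factor $\Lambda_\gamma\otimes I_s$. Conjugating by $Q$ turns "direct sum over the $s$ cycles" into "Kronecker product with $I_s$" in the interleaved ordering. Block-diagonalizability then follows because similarity by the invertible matrix $QF_{\pi_1}$ (or $Q\tilde F$) preserves it.

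The main obstacle is this last identification. The statement as written is ambiguous: it uses $\gamma_n,\dots,\gamma_1$ and an $n$-th root even though the cycles have length $\ell=n/s$. It also leaves implicit both the order of the Kronecker factors and the fact that one $\gamma$ serves all $s$ cycles. I would first settle the indexing by working out the example $n=4$, $s=2$ explicitly. If the literal formula fails, I would prove it in the form $H_{\pi_s}=Q\,(F_{\pi_1}\otimes I_s)$-type conjugation with the correctly interleaved block structure. From that form the block-diagonalizability conclusion follows unchanged.
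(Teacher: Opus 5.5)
Your reduction is the one the paper intends: its proof is only the line ``direct consequence of Theorem~\ref{diagonalization H}'', resting on the preceding Remark that rewrites $H_{\pi_s}$ as $\tilde H_{\pi_1}$ through the blocks $\tilde h_j$ (with the consecutive grouping these are literally the same matrix, so on that route no permutation $Q$ is needed at all). Your doubt about the literal formula is justified, since $\Lambda_\gamma\otimes I_s$ has order $snm_1$ while $H_{\pi_s}$ has order $\sum_i m_i=nm_1$: the identity only makes sense with $\ell=n/s$ in place of $n$, $\omega$ a primitive $\ell$-th root of unity, and one $\ell$-th root $\gamma$ common to all $s$ cycle products (otherwise the middle factor is $\Lambda_{\tilde\gamma}$ with $\tilde\gamma=\diag(\gamma^{(1)},\ldots,\gamma^{(s)})$), whereas the block-diagonalizability conclusion follows exactly as you argue.
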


\begin{proof}
    It is a direct consequence of Theorem~\ref{diagonalization H}.
\end{proof}
We will designate $F_{\pi_s}$ by Conformal Discrete Fourier Transform associated to $H_{\pi_s}$ and we present an example of such matrix. 

\begin{example}
    Consider $n=4$, then the Conformal Discrete Fourier Transform of order $m=m_1+m_2+m_3+m_4$ is:
    \[F_{\pi_1}=\begin{pmatrix}
        \gamma_4 & \gamma_4 &\gamma_4&\gamma_4\\
        \gamma_3&-i\gamma_3&-\gamma_3&i\gamma_3\\
        \gamma_2&-\gamma_2&\gamma_2&-\gamma_2\\
        \gamma_1&i\gamma_1&-\gamma_1&-i\gamma_1
    \end{pmatrix},
    \]
where $\gamma_1=h_4\gamma^{-1}$; $\gamma_2=h_3h_4\gamma^{-2}$; $\gamma_3=h_2h_3h_4\gamma^{-3}$; $\gamma_4=I_{m_1}$ and $\gamma^4=h_1h_2h_3h_4$. If $m_1=2$; $m_2=3$; $m_3=4$; $m_4=3$ and
$$h_1 = \begin{pmatrix} 1 & 0 & 0 \\ 0 & 1 & 0 \end{pmatrix};\,
    h_2 = \begin{pmatrix} 2 & 0 & 0 & 0 \\ 0 & 3 & 0 & 0 \\ 0 & 0 & 0 & 0 \end{pmatrix};\,    
    h_3 = \begin{pmatrix} 1 & 0 & 0 \\ 0 & 1 & 0 \\ 0 & 0 & 0 \\ 0 & 0 & 0 \end{pmatrix};\,   
    h_4 = \begin{pmatrix} 1 & 0 \\ 0 & 1 \\ 0 & 0 \end{pmatrix}
    $$
    then
    $$
\gamma_1 =\begin{pmatrix} 2^{-1/4} & 0 \\ 0 & 3^{-1/4} \\ 0 & 0 \end{pmatrix};\,
\gamma_2 = \begin{pmatrix} 2^{-1/2} & 0 \\ 0 & 3^{-1/2} \\ 0 & 0 \\ 0 & 0 \end{pmatrix};\,
\gamma_3 = \begin{pmatrix} 2^{1/4} & 0 \\ 0 & 3^{1/4} \\ 0 & 0 \end{pmatrix};\,
\gamma_4 = \begin{pmatrix} 1 & 0 \\ 0 & 1 \end{pmatrix}.
$$
Therefore,
\[
F_{\pi_1}=\left(\begin{array}{cc:cc:cc:cc}
1 & 0 & 1 & 0 & 1 & 0 & 1 & 0 \\
0 & 1 & 0 & 1 & 0 & 1 & 0 & 1 \\ \hdashline
2^{1/4} & 0 & -i 2^{1/4} & 0 & -2^{1/4} & 0 & i 2^{1/4} & 0 \\
0 & 3^{1/4} & 0 & -i 3^{1/4} & 0 & -3^{1/4} & 0 & i 3^{1/4} \\
0 & 0 & 0 & 0 & 0 & 0 & 0 & 0 \\ \hdashline
2^{-1/2} & 0 & -2^{-1/2} & 0 & 2^{-1/2} & 0 & -2^{-1/2} & 0 \\
0 & 3^{-1/2} & 0 & -3^{-1/2} & 0 & 3^{-1/2} & 0 & -3^{-1/2} \\
0 & 0 & 0 & 0 & 0 & 0 & 0 & 0 \\
0 & 0 & 0 & 0 & 0 & 0 & 0 & 0 \\ \hdashline
2^{-1/4} & 0 & i 2^{-1/4} & 0 & -2^{-1/4} & 0 & -i & 0 \\
0 & 3^{-1/4} & 0 & i 3^{-1/4} & 0 & -3^{-1/4} & 0 & -i \\
0 & 0 & 0 & 0 & 0 & 0 & 0 & 0 
\end{array}\right)
\]
is the Conformal Discrete Fourier Transform associated to 
$$H_{\pi_1}=\begin{pmatrix}
    0 & h_1&0&0\\
    0&0&h_2&\\
    0&0&0&h_3\\
    h_4&0&0&0
\end{pmatrix}.$$
\end{example}

$\hfill{\diamond}$
\section{The conformal circulant matrix}

The following lemma provides a necessary and sufficient condition under which the blockwise summation of the powers of $H_{\pi_{s}}$ is possible.

\begin{proposition}
The nonzero blocks appearing in the powers of $H_{\pi_s}$ contain products with consecutive increasing indices,
\[
h_i h_{i+1}\cdots h_{i+k-1},
\]
for all $k=2,\ldots,n$ if and only if $s=1$.
\end{proposition}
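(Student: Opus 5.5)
The plan is to use the description of the nonzero blocks of $(H_{\pi_s})^k$ given in the remark preceding Lemma~\ref{Def Dj matrices}. There it is shown that $(H_{\pi_s})_{i,j}\neq 0$ only when $j\equiv i+s \pmod n$. It follows, by induction on $k$, that the only nonzero block in block row $i$ of $(H_{\pi_s})^k$ sits in block column $i+ks \pmod n$. That block equals
\[
h_i h_{i+s} h_{i+2s}\cdots h_{i+(k-1)s},
\]
with indices taken modulo $n$. The induction step is the block product $(H_{\pi_s})^{k}=(H_{\pi_s})^{k-1}H_{\pi_s}$. Block row $i$ of $(H_{\pi_s})^{k-1}$ has its single nonzero block in column $i+(k-1)s$. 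Block row $i+(k-1)s$ of $H_{\pi_s}$ has its single nonzero block $h_{i+(k-1)s}$ in column $i+ks$. So the product appends exactly the factor $h_{i+(k-1)s}$. I would write this induction out once, since the rest of the argument rests on it.

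For sufficiency, take $s=1$. The formula above then gives the block $h_ih_{i+1}\cdots h_{i+k-1}$ in position $(i,i+k)$ of $(H_{\pi_1})^k$, for every $k=2,\ldots,n$. These are products with consecutive increasing indices modulo $n$, as Example~\ref{example conformal perm 1} already illustrates for $n=3$.

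For necessity, suppose that for every $k=2,\ldots,n$ the nonzero blocks of $(H_{\pi_s})^k$ are products $h_ih_{i+1}\cdots h_{i+k-1}$. Take $k=2$: by the formula, every nonzero block of $(H_{\pi_s})^2$ is $h_ih_{i+s}$. Comparing the second factor with $h_ih_{i+1}$ forces $i+s\equiv i+1 \pmod n$. Since $1\le s\le n-1$, this gives $s=1$.

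The main obstacle is making the comparison of factors meaningful. Two formal products with different index sequences could in principle coincide as matrices, for example if some $h_j$ happen to be equal. The statement has to be read at the level of which blocks $h_j$ appear, that is, the index pattern of the words in the $h_j$, with the $h_j$ treated as generic or indeterminate blocks. I would say so explicitly at the start. An alternative that stays at the matrix level is a dimension argument: $h_ih_{i+s}$ has size $m_i\times m_{i+2s}$, and for generic block sizes this differs from $m_i\times m_{i+2}$ unless $s=1$. That argument needs distinct $m_j$, however, so I would rely on the formal index argument.
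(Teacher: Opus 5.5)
Your proposal is correct and follows essentially the same route as the paper. Both arguments identify the only nonzero block in block row $i$ of $(H_{\pi_s})^k$ as $h_ih_{i+s}\cdots h_{i+(k-1)s}$, and deduce that consecutive indices force $i+s\equiv i+1 \pmod{n}$, hence $s=1$. Your explicit induction, and your insistence on reading the comparison at the level of formal words in the $h_j$, make precise what the paper only signals by appealing to the noncommutativity of the $h_i$.
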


\begin{proof}
Since each block row of $H_{\pi_s}$ contains exactly one nonzero block, namely $h_i$ which is located in the column $\pi_{s}(i)$, for which
\[
j\equiv i+s \pmod{n},
\] 
the multiplication by $H_{\pi_s}$ shifts the column index by $s \pmod {n}$. The nonzero contribution to an entry of $(H_{\pi_s})^k$ is
\[
\left(H_{\pi_s}\right)^k_{i,i+ks}
=
h_i\,h_{\pi_s(i)}\,h_{\pi_s^2(i)}\cdots h_{\pi_s^{\,k-1}(i)},
\]
where $\pi_s^t$ denotes the $t$-th iterate of $\pi_s$, and the factors appear in this precise order. We note that this order cannot be rearranged,
because the elements $h_i$ are non commutative.
Thus, these products have consecutive increasing indices if and only if
\[
\pi_{s}(i) =i+s\equiv i+1 \pmod{n},
\]
for every $i$. Hence
\[
s\equiv 1 \pmod{n}.
\]
Since $s\in\{1,2,\ldots,n-1\}$, this implies that $s=1.$

Conversely, if $s=1$, then $\pi_s(i)=i+1 \pmod n$, and therefore every nonzero block
product in $(H_{\pi_s})^k$ is

\begin{equation}\label{nonzeropowersH}
\left(H_{\pi_s}\right)^k_{i,i+k}
=
h_i h_{i+1}h_{i+2}\cdots h_{i+k-1},
\end{equation}
so, the products have consecutive increasing indices (recall that the indices are understood modulo $n$). Hence, the desired property occurs only in the case $s=1$.
\end{proof}
We now define the conformal circulant matrix.

\begin{definition}\label{conformal_x}
Let $h_i$ be matrices of order $m_i\times m_{i+1}$, $\textbf{x}= {(x_k)}\in\mathbb{K}^n$. Let $\mbox{\rm{Circ}}(\textbf{x})$ be the partitioned matrix such that $\mbox{\rm{Circ}}(\textbf{x})=\left[\mbox{\rm{Circ}}(\textbf{x})[m_i,m_{i+1}]\right] $ where, 
    \[
    \mbox{\rm{Circ}}(\textbf{x})[m_i,m_{i+1}]=\sum_{k=0}^{n-1} x_k (H_{\pi_1})^k.
    \]
\end{definition}

\begin{example}\label{example conformal circulant}
Consider the following conformal permutation matrix
\[
H_{\pi_1}=
\begin{pmatrix}
0 & h_1 & 0 & 0 & 0 & 0 & 0 & 0 & 0 & 0\\
0 & 0 & h_2 & 0 & 0 & 0 & 0 & 0 & 0 & 0\\
0 & 0 & 0 & h_3 & 0 & 0 & 0 & 0 & 0 & 0\\
0 & 0 & 0 & 0 & h_4 & 0 & 0 & 0 & 0 & 0\\
0 & 0 & 0 & 0 & 0 & h_5 & 0 & 0 & 0 & 0\\
0 & 0 & 0 & 0 & 0 & 0 & h_6 & 0 & 0 & 0\\
0 & 0 & 0 & 0 & 0 & 0 & 0 & h_7 & 0 & 0\\
0 & 0 & 0 & 0 & 0 & 0 & 0 & 0 & h_8 & 0\\
0 & 0 & 0 & 0 & 0 & 0 & 0 & 0 & 0 & h_9\\
h_{10} & 0 & 0 & 0 & 0 & 0 & 0 & 0 & 0 & 0
\end{pmatrix},
\]
where
\[
h_i\in M_{m_i\times m_{i+1}} (\mathbb C),
\quad i=1,\ldots,10,
\]
and the indices are taken $\pmod{n}$, that is,
$
h_{10}\in M_{m_{10}\times m_1}(\mathbb C).
$
In the next table we display some particular orders of the block matrices $h_i$:

\[
\begin{array}{|c|c|}
\hline
\text{$h_i$} & \text{size of the block $h_i$} \\ \hline
h_1 & 2\times 3\\
h_2 & 3\times 1\\
h_3 & 1\times 4\\
h_4 & 4\times 2\\
h_5 & 2\times 3\\
h_6 & 3\times 2\\
h_7 & 2\times 1\\
h_8 & 1\times 3\\
h_9 & 3\times 2\\
h_{10} & 2\times 2\\\hline
\end{array}
\]
\medskip

The nonzero entries of the powers of $H_{\pi_1}$ are given by \eqref{nonzeropowersH}, that is:
$$
(H_{\pi_1}^k)_{i,i+k}=
h_i h_{i+1}\cdots h_{i+k-1}, k=0, \ldots, 9, i=1, \ldots, 10.
$$

Note that each nonzero block of $H_{\pi_1}^k\in
M_{m_i\times m_j}(\mathbb{K}).$ Let $x = \bf{1}$. 
Therefore, the conformal circulant matrix
\[ 
\mbox{\rm{Circ}}({\bf 1})=
H_{\pi_1}^0+H_{\pi_1}^1+H_{\pi_1}^2+\cdots+H_{\pi_1}^9
\]
which is given by
\begin{equation*}
\makebox[\textwidth][c]{
  \resizebox{1.07\textwidth}{!}{
$\begin{pmatrix}
I_2 &
h_1 &
h_1h_2 &
h_1h_2h_3 &
h_1h_2h_3h_4 &
h_1h_2h_3h_4h_5 &
h_1h_2h_3h_4h_5h_6 &
h_1h_2h_3h_4h_5h_6h_7 &
h_1h_2h_3h_4h_5h_6h_7h_8 &
h_1h_2h_3h_4h_5h_6h_7h_8h_9
\\
h_2h_3h_4h_5h_6h_7h_8h_9h_{10} &
I_3 &
h_2 &
h_2h_3 &
h_2h_3h_4 &
h_2h_3h_4h_5 &
h_2h_3h_4h_5h_6 &
h_2h_3h_4h_5h_6h_7 &
h_2h_3h_4h_5h_6h_7h_8 &
h_2h_3h_4h_5h_6h_7h_8h_9
\\
h_3h_4h_5h_6h_7h_8h_9h_{10} &
h_3h_4h_5h_6h_7h_8h_9h_{10}h_1 &
I_2 &
h_3 &
h_3h_4 &
h_3h_4h_5 &
h_3h_4h_5h_6 &
h_3h_4h_5h_6h_7 &
h_3h_4h_5h_6h_7h_8 &
h_3h_4h_5h_6h_7h_8h_9
\\
h_4h_5h_6h_7h_8h_9h_{10} &
h_4h_5h_6h_7h_8h_9h_{10}h_1 &
h_4h_5h_6h_7h_8h_9h_{10}h_1h_2 &
I_4 &
h_4 &
h_4h_5 &
h_4h_5h_6 &
h_4h_5h_6h_7 &
h_4h_5h_6h_7h_8 &
h_4h_5h_6h_7h_8h_9
\\
h_5h_6h_7h_8h_9h_{10} &
h_5h_6h_7h_8h_9h_{10}h_1 &
h_5h_6h_7h_8h_9h_{10}h_1h_2 &
h_5h_6h_7h_8h_9h_{10}h_1h_2h_3 &
I_2 &
h_5 &
h_5h_6 &
h_5h_6h_7 &
h_5h_6h_7h_8 &
h_5h_6h_7h_8h_9
\\
h_6h_7h_8h_9h_{10} &
h_6h_7h_8h_9h_{10}h_1 &
h_6h_7h_8h_9h_{10}h_1h_2 &
h_6h_7h_8h_9h_{10}h_1h_2h_3 &
h_6h_7h_8h_9h_{10}h_1h_2h_3h_4 &
I_3 &
h_6 &
h_6h_7 &
h_6h_7h_8 &
h_6h_7h_8h_9
\\
h_7h_8h_9h_{10} &
h_7h_8h_9h_{10}h_1 &
h_7h_8h_9h_{10}h_1h_2 &
h_7h_8h_9h_{10}h_1h_2h_3 &
h_7h_8h_9h_{10}h_1h_2h_3h_4 &
h_7h_8h_9h_{10}h_1h_2h_3h_4h_5 &
I_2 &
h_7 &
h_7h_8 &
h_7h_8h_9
\\
h_8h_9h_{10} &
h_8h_9h_{10}h_1 &
h_8h_9h_{10}h_1h_2 &
h_8h_9h_{10}h_1h_2h_3 &
h_8h_9h_{10}h_1h_2h_3h_4 &
h_8h_9h_{10}h_1h_2h_3h_4h_5 &
h_8h_9h_{10}h_1h_2h_3h_4h_5h_6 &
I_2 &
h_8 &
h_8h_9
\\
h_9h_{10} &
h_9h_{10}h_1 &
h_9h_{10}h_1h_2 &
h_9h_{10}h_1h_2h_3 &
h_9h_{10}h_1h_2h_3h_4 &
h_9h_{10}h_1h_2h_3h_4h_5 &
h_9h_{10}h_1h_2h_3h_4h_5h_6 &
h_9h_{10}h_1h_2h_3h_4h_5h_6h_7 &
I_3 &
h_9
\\
h_{10} &
h_{10}h_1 &
h_{10}h_1h_2 &
h_{10}h_1h_2h_3 &
h_{10}h_1h_2h_3h_4 &
h_{10}h_1h_2h_3h_4h_5 &
h_{10}h_1h_2h_3h_4h_5h_6 &
h_{10}h_1h_2h_3h_4h_5h_6h_7 &
h_{10}h_1h_2h_3h_4h_5h_6h_7h_8 &
I_2
\end{pmatrix}
$}}
\end{equation*}
Thus, the block in position $(i,j)$ of the previous matrix is
\[
(\mbox{\rm{Circ}}(\mathbf{1}))_{ij}=
\begin{cases}
h_i h_{i+1}\cdots h_{j-1}, & i<j,\\
I_{m_i}, & i=j,\\
h_i h_{i+1}\cdots h_{10}h_1\cdots h_{j-1}, & i>j.
\end{cases}
\] 

or using the notation in the equations (\ref{gamma_n}), and (\ref{gamma_i}) we can rewrite: 
\[
(\mbox{\rm{Circ}}(\mathbf{1}))_{ij} = \begin{cases}  \gamma_{n-i} \, \gamma_{n-j}^{-1}, & i < j, \\  I_{m_i}, & i = j, \\  \gamma_{n-i} \, \gamma_{n-j}^{-1} \, \gamma^n, & i > j.  \end{cases}\quad\textit{  $i, j =0, 1, \ldots, 9$}
\]
\end{example}$\hfill{\diamond}$
\bigskip

\begin{theorem}\label{diag conf circulant} Let $\textbf{x}= {(x_k)}\in\mathbb{K}^n$ and $\mbox{\rm{Circ}}(\textbf{x})=[\mbox{\rm{Circ}}(\textbf{x})[m_i,m_{i+1}]]$ 
be a conformal circulant matrix as in Definition \ref{conformal_x}. Then, the spectrum of $\mbox{\rm{Circ}} (\textbf{x})$ contains the set
\begin{equation}\label{os mu_j's}
\bigcup_{j=0}^{n-1} \sigma \left(\boldsymbol{\mu}_j\right)    
\end{equation}
where $\boldsymbol{\mu}_j$ denote the $m_1\times m_1$ matrices $\sum_{k=0}^{n-1} x_k(\omega^{j}\gamma)^k$, for all $j=0,1,\ldots,n-1.$ Let $\gamma$ be an invertible and diagonalizable matrix of satisfying (\ref{gamma_n}) and $\gamma_i, i=0, \ldots, n$ satisfying (\ref{gamma_i}). If $\gamma_i$ are invertible for $i=1,\ldots,n$ and $m_1\cdot n=\sum\limits_{k=1}^n m_k$, then all eigenvalues of $\mbox{\rm{Circ}}(\textbf{x})$ are given by~\eqref{os mu_j's} where $\boldsymbol{\mu}_j$ are: 
$$
\small{
\begin{aligned}
 \frac{1}{n} \left[ n x_0 \, I_{m_1} + \sum_{0 \le i < s \le n-1} x_{s-i} \omega^{(s-r)j} I_{m_1} \;+\; \sum_{0 \le s < i \le n-1} x_{n+s-i} \, \omega^{(s-r)j} \, \left( \gamma_{n-s}^{-1} \, \gamma^n \, \gamma_{n-s} \right) \right].    
\end{aligned}
}
$$
\end{theorem}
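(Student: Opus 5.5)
The approach is to reduce everything to the eigenstructure of $H_{\pi_1}$ established in Theorem~\ref{diagonalization H}. Since $\mbox{\rm{Circ}}(\textbf{x})=\sum_{k=0}^{n-1}x_k(H_{\pi_1})^k$ is a polynomial $p(H_{\pi_1})$ with $p(t)=\sum_k x_k t^k$, it should inherit the eigenvectors of $H_{\pi_1}$.

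\textbf{Block eigen-relation.} For each $j=0,\ldots,n-1$, let
\[
V_j=\begin{pmatrix}\gamma_n\\ \gamma_{n-1}\omega^{j}\\ \vdots\\ \gamma_1\omega^{(n-1)j}\end{pmatrix},
\]
which is the $j$-th block column of $F_{\pi_1}$. The computation in the proof of Theorem~\ref{diagonalization H} gives
\[
H_{\pi_1}V_j=V_j\,(\omega^j\gamma).
\]
This uses only \eqref{gamma_n} and \eqref{gamma_i}, not the dimension condition. Iterating (using Remark~\ref{operations A-conformal matrix} for the conformal product with a fixed block) gives $H_{\pi_1}^kV_j=V_j(\omega^j\gamma)^k$. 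Summing over $k$,
\[
\mbox{\rm{Circ}}(\textbf{x})\,V_j=V_j\,\boldsymbol{\mu}_j .
\]
If $(\lambda,u)$ is an eigenpair of $\boldsymbol{\mu}_j$, then $\mbox{\rm{Circ}}(\textbf{x})(V_ju)=\lambda V_ju$. Moreover $V_ju\neq0$, because its top block is $\gamma_n u=I_{m_1}u=u$. Hence $\sigma(\boldsymbol{\mu}_j)\subseteq\sigma(\mbox{\rm{Circ}}(\textbf{x}))$ for every $j$, which is the containment \eqref{os mu_j's}. Diagonalizability of $\gamma$ is not needed for this part.

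\textbf{Completeness.} Assume now that $m_1n=\sum_k m_k$ and that every $\gamma_i$ is invertible. Invertibility forces each $\gamma_i$ to be square, so $m_i=m_1$ for all $i$. Then $F_{\pi_1}=[V_0\cdots V_{n-1}]$ is square of order $m_1n$, and $F_{\pi_1}=\diag(\gamma_n,\ldots,\gamma_1)\,(F\otimes I_{m_1})$ up to the normalisation of $F$. Both factors are invertible, so $F_{\pi_1}$ is invertible. It follows that
\[
F_{\pi_1}^{-1}\,\mbox{\rm{Circ}}(\textbf{x})\,F_{\pi_1}=\diag(\boldsymbol{\mu}_0,\ldots,\boldsymbol{\mu}_{n-1}),
\]
so the spectrum, counted with multiplicity, is exactly $\bigcup_j\sigma(\boldsymbol{\mu}_j)$.

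\textbf{Explicit block formula.} To obtain the stated expression for $\boldsymbol{\mu}_j$, I would compute the $j$-th diagonal block of $F_{\pi_1}^{-1}\mbox{\rm{Circ}}(\textbf{x})F_{\pi_1}$ directly, in three steps.
\begin{enumerate}
\item Write $F_{\pi_1}^{-1}=\frac1n(\bar F\otimes I)\,\diag(\gamma_n^{-1},\ldots,\gamma_1^{-1})$, using that $F$ is unitary.
\item Insert the block entries of $\mbox{\rm{Circ}}(\textbf{x})$. Generalizing Example~\ref{example conformal circulant}, these are $x_{s-i}\gamma_{n-i}\gamma_{n-s}^{-1}$ for $i<s$ and $x_{n+s-i}\gamma_{n-i}\gamma_{n-s}^{-1}\gamma^n$ for $i>s$. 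Both follow from $h_i\cdots h_{s-1}=\gamma_{n-i}\gamma^{s-i}\gamma_{n-s}^{-1}$, which is a telescoping consequence of \eqref{gamma_i}.
\item Conjugate each entry by the outer diagonal factors $\gamma_{n-i}^{-1}(\cdot)\gamma_{n-s}$. The $i<s$ terms collapse to scalar multiples of $I_{m_1}$, and the wrap-around terms leave $\gamma_{n-s}^{-1}\gamma^n\gamma_{n-s}$. Collecting the Fourier phases $\omega^{(s-r)j}$ then yields the displayed expression.
\end{enumerate}

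\textbf{Main obstacle.} I expect the bookkeeping in this last step to be the hardest part: the powers of $\gamma$ arising from the telescoping, the wrap-around index $n+s-i$, and the exact phases. The plan is to verify it first on $n=2,3$ and then check it against the already-established form $\boldsymbol{\mu}_j=\sum_k x_k(\omega^j\gamma)^k$. This is consistent only after simplifying the conjugates, since $\gamma_{n-s}$ commutes with $\gamma$ when all blocks are square. That consistency check fixes any index misprints.
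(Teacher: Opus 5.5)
Your inclusion argument ($H_{\pi_1}V_j=V_j(\omega^j\gamma)$, hence $\mbox{\rm{Circ}}(\mathbf{x})V_j=V_j\boldsymbol{\mu}_j$, with $V_ju\neq 0$ because its top block is $\gamma_n u=u$) is correct. So is your completeness argument: invertibility of the $\gamma_i$ forces $m_i=m_1$, so $F_{\pi_1}=\diag(\gamma_n,\ldots,\gamma_1)(F\otimes I_{m_1})$ is invertible and conjugates $\mbox{\rm{Circ}}(\mathbf{x})$ to $\diag(\boldsymbol{\mu}_0,\ldots,\boldsymbol{\mu}_{n-1})$. Both follow the paper's route, but more carefully. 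The paper only calls the inclusion ``clear'' from Theorem~\ref{diagonalization H}, states it with the inclusion reversed, and never checks that $F_{\pi_1}$ is invertible. The genuine gap is in the explicit formula for $\boldsymbol{\mu}_j$. It cannot be closed, because the displayed expression is false.

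Your step 2 contradicts itself. The telescoping identity you quote carries a factor $\gamma^{s-i}$, so it does not produce the entries $x_{s-i}\gamma_{n-i}\gamma_{n-s}^{-1}$ and $x_{n+s-i}\gamma_{n-i}\gamma_{n-s}^{-1}\gamma^n$ taken from Example~\ref{example conformal circulant}. Those entries look right there only because $\gamma=I$. Index block rows and columns by $0,\ldots,n-1$ and use $\gamma_{n-r}=h_{r+1}\cdots h_n\gamma^{-(n-r)}$ together with \eqref{gamma_n}. With $k=(s-r)\bmod n$ this gives $(\mbox{\rm{Circ}}(\mathbf{x}))_{r,s}=x_k\,\gamma_{n-r}\gamma^{k}\gamma_{n-s}^{-1}$ in all three cases.

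Conjugating by $\gamma_{n-r}^{-1}(\cdot)\gamma_{n-s}$ leaves $x_k\gamma^k$. This is neither a multiple of $I_{m_1}$ nor $\gamma_{n-s}^{-1}\gamma^n\gamma_{n-s}$, and collecting the phases $\omega^{kj}$ just returns $\boldsymbol{\mu}_j=\sum_k x_k(\omega^j\gamma)^k$. So your consistency check would expose an actual error, not a misprint. Take $n=2$ with $1\times 1$ blocks $h_1=4$, $h_2=1$, $\gamma=2$ (hence $\gamma_1=\tfrac{1}{2}$, $\gamma_2=1$) and $\mathbf{x}=(0,1)$. Every hypothesis holds, and $\mbox{\rm{Circ}}(\mathbf{x})=\begin{pmatrix}0&4\\1&0\end{pmatrix}$ has spectrum $\{2,-2\}$. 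The displayed formula instead gives $\boldsymbol{\mu}_0=\tfrac{1}{2}(1+4)=\tfrac{5}{2}$.

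Everything commutes in this example, so appealing to commutation of $\gamma_{n-s}$ with $\gamma$ cannot rescue the formula. That commutation is also false in general: for $n=2$, $\gamma_1=h_2\gamma^{-1}$ commutes with $\gamma$ only if $h_2$ does. The paper's proof computes $\boldsymbol{\mu}_j$ exactly as you propose, with the same incorrect entries, so it does not establish the displayed formula either. What is provable is the spectral part, with $\boldsymbol{\mu}_j=\sum_k x_k(\omega^j\gamma)^k$.
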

\medskip
\begin{proof}
If $\gamma_i$ is invertible for $i=1,\ldots,n-1$, by Theorem~\ref{diagonalization H} it is clear that $\bigcup\limits_{j=0}^{n-1} \sigma \left(\boldsymbol{\mu}_j\right)$ contains the eigenvalues of $\mbox{\rm{Circ}} (\textbf{x})$. On the other hand, if  $m_1\cdot n=\sum\limits_{k=1}^n m_k$, we have, 
$$\mbox{\rm diag} (\mu_0, \ldots, \mu_{n-1}) = F_{\pi_1}^{-1} \mbox{\rm{Circ}}(\textbf{x})F_{\pi_1}$$ and 
\[
\mbox{\rm{Circ}}(\textbf{x})=F_{\pi_1} \diag\left(\sum_{k=0}^{n-1} x_k\gamma^k,\sum_{k=0}^{n-1} x_k(\omega\gamma)^k,\ldots,\sum_{k=0}^{n-1} x_k(\omega^{n-1}\gamma)^k\right)F^{-1}_{\pi_1}.
\]

Thus, $\sigma(\mbox{\rm{Circ}}(\textbf{x}))=\bigcup\limits_{j=0}^{n-1} \sigma \left(\boldsymbol{\mu}_j\right).$ 

\noindent In addition, as

\[
\diag(\boldsymbol{\mu}_0,\boldsymbol{\mu}_1,\ldots,\boldsymbol{\mu}_{n-1})=F^{-1}_{\pi_1}\mbox{\rm{Circ}}(\textbf{x})F_{\pi_1},
\]
then
\begin{align*}
    \boldsymbol{\mu}_j &= \sum_{r=0}^{n-1} \sum_{s=0}^{n-1} (F_{\pi_1}^{-1})_{j, r} \, (\mbox{\rm{Circ}}(\textbf{x}))_{r, s} \, (F_{\pi_1})_{s, j}\\
    &=\frac{1}{n} \sum_{r=0}^{n-1} \sum_{s=0}^{n-1}\gamma_{n-r}^{-1}(\mbox{\rm{Circ}}(\textbf{x}))_{r, s}\gamma_{n-s}\omega^{(s - r)j},
\end{align*}
where

\[
(\mbox{\rm{Circ}}(\textbf{x}))_{r, s}= 
\begin{cases}  
x_{s-r} \gamma_{n-r} \gamma_{n-s}^{-1}; & r < s, \\  
x_0; & r = s, \\ 
x_{n+s-r} \gamma_{n-r} \gamma_{n-s}^{-1} \gamma^n; & r > s.  
\end{cases}
\]
Decomposing the double sum for the cases $r < s$, $r = s$, and $r > s$ we obtain:

$$
\begin{aligned} \boldsymbol{\mu}_j = \frac{1}{n} \Biggr[ & \sum_{r=0}^{n-1} \gamma_{n-r}^{-1} \, x_0 \, \gamma_{n-r} \\ & + \sum_{0 \le r < s \le n-1} \gamma_{n-r}^{-1} \left( x_{s-r} \cdot \gamma_{n-r} \, \gamma_{n-s}^{-1} \right) \gamma_{n-s} \; \omega^{(s-r)j} \\ & + \sum_{0 \le s < r \le n-1} \gamma_{n-r}^{-1} \left( x_{n+s-r} \cdot \gamma_{n-r} \, \gamma_{n-s}^{-1} \, \gamma^n \right) \gamma_{n-s} \; \omega^{(s-r)j} \Biggr]. \end{aligned}$$

Thus, we can write:
$$
\begin{cases}
\gamma_{n-r}^{-1}x_0\gamma_{n-r}
=x_0I_{m_1}, 
& \mbox{\, if \,} r=s, \\[2mm]
\underbrace{\gamma_{n-r}^{-1}x_{r-s}\gamma_{n-r}}
_{=\,x_{r-s}I_{m_1}}
\gamma_{n-s}^{-1}\gamma_{n-s}
\omega^{(s-r)j}
=x_{r-s}\omega^{(s-r)k} I_{m_1},
& \mbox{\, if \,} r<s, \\[2mm]
\underbrace{\gamma_{n-r}^{-1}x_{n+s-r}\gamma_{n-r}}
_{=\,x_{n+s-r}I_{m_1}}
\gamma_{n-s}^{-1}\gamma^n\gamma_{n-s}
\omega^{(s-r)j}
=
x_{n+s-r}\gamma_{n-s}^{-1}\gamma^n
\gamma_{n-s}\omega^{(s-r)j},
& \mbox{\, if\, } r>s.
\end{cases}
$$

Then, using the previous expressions, $\boldsymbol{\mu}_j $ are given by
$$
\small{
\begin{aligned}
 \frac{1}{n} \left[ n x_0 \, I_{m_1} + \sum_{0 \le r < s \le n-1} x_{s-r} \omega^{(s-r)j} I_{m_1} \;+\; \sum_{0 \le s < r \le n-1} x_{n+s-r} \, \omega^{(s-r)j} \, \left( \gamma_{n-s}^{-1} \, \gamma^n \, \gamma_{n-s} \right) \right] ,   
\end{aligned}
}
$$
and the result follow.
\end{proof}

\begin{example}
   Let $H_{\pi_1}$ be as in Example~\ref{example conformal circulant}, where
   \begin{align*}
h_1 &= \begin{pmatrix} 1 & 0 & 0 \\ 0 & 1 & 0 \end{pmatrix}, &
h_2 &= \begin{pmatrix} 1 & 0 \\ 0 & 1 \\ 0 & 0 \end{pmatrix}, &
h_3 &= \begin{pmatrix} 1 & 0 & 0 & 0 \\ 0 & 1 & 0 & 0 \end{pmatrix} \\
h_4 &= \begin{pmatrix} 1 & 0 \\ 0 & 1 \\ 0 & 0 \\ 0 & 0 \end{pmatrix}, &
h_5 &= \begin{pmatrix} 1 & 0 & 0 \\ 0 & 1 & 0 \end{pmatrix}, &
h_6 &= \begin{pmatrix} 1 & 0 \\ 0 & 1 \\ 0 & 0 \end{pmatrix} \\
h_7 &= \begin{pmatrix} 1 & 0 \\ 0 & 1 \end{pmatrix}, &
h_8 &= \begin{pmatrix} 1 & 0 & 0 \\ 0 & 1 & 0 \end{pmatrix}, &
h_9 &= \begin{pmatrix} 1 & 0 \\ 0 & 1 \\ 0 & 0 \end{pmatrix} \\
h_{10} &= \begin{pmatrix} 1 & 0 \\ 0 & 1 \end{pmatrix}.
\end{align*}
In this case $\mbox{\rm{Circ}}(\mathbf{1})=(\mbox{\rm{Circ}}(\mathbf{1}))_{ij}$ where
\[
(\mbox{\rm{Circ}}(\mathbf{1}))_{i\neq j}= \begin{pmatrix} 1 & 0 & 0 & \dots & 0 \\ 0 & 1 & 0 & \dots & 0 \\ 0 & 0 & 0 & \dots & 0 \\ \vdots & \vdots & \vdots & \ddots & \vdots \\ 0 & 0 & 0 & \dots & 0 \end{pmatrix}=C_{i,j},
\]
where $C_{i,j}$ represents a block of order $d_i \times d_j.$ The dimensions of the blocks are: $d_1=2, d_2=3, d_3=2, d_4=4, d_5=2, d_6=3, d_7=2, d_8=2, d_9=3, d_{10}=2$. Thus, the matrix $\mbox{\rm{Circ}}(\mathbf{1})$ is as follows:

\[
\footnotesize{ 
\begin{pmatrix}
I_2 &C_{1,2}&C_{1,3}&C_{1,4}&C_{1,5}&C_{1,6}&C_{1,7}&C_{1,8}&C_{1,9}&C_{1,10} \\
C_{2,1}&I_3&C_{2,3}&C_{2,4}&C_{2,5}&C_{2,6}&C_{2,7}&C_{2,8}&C_{2,9}&C_{2,10}\\
C_{3,1}&C_{3,2}&I_2&C_{3,4}&C_{3,5}&C_{3,6}&C_{3,7}&C_{3,8}&C_{3,9}&C_{3,10}\\
C_{4,1}&C_{4,2}&C_{4,3}&I_4&C_{4,5}&C_{4,6}&C_{4,7}&C_{4,8}&C_{4,9}&C_{4,10}\\
C_{5,1}&C_{5,2}&C_{5,3}&C_{5,4}&I_2&C_{5,6}&C_{5,7}&C_{5,8}&C_{5,9}&C_{5,10}\\
C_{6,1}&C_{6,2}&C_{6,3}&C_{6,4}&C_{6,5}&I_3&C_{6,7}&C_{6,8}&C_{6,9}&C_{6,10}\\
C_{7,1}&C_{7,2}&C_{7,3}&C_{7,4}&C_{7,5}&C_{7,6} & I_2 & C_{7,8}&C_{7,9}&C_{7,10}\\
C_{8,1}&C_{8,2}&C_{8,3}&C_{8,4}&C_{8,5}&C_{8,6}&C_{8,7}&I_2&C_{8,9}&C_{8,10}\\
C_{9,1}&C_{9,2}&C_{9,3}&C_{9,4}&C_{9,5}&C_{9,6}&C_{9,7}&C_{9,8}&I_3&C_{9,10}\\
C_{10,1}&C_{10,2}&C_{10,3}&C_{10,4}&C_{10,5}&C_{10,6}&C_{10,7}&C_{10,8}&C_{10,9} & I_2
\end{pmatrix},
}
\]

and $\gamma=\begin{pmatrix} 1 & 0\\ 0 & 1 \end{pmatrix}$. 

By Theorem~\ref{diag conf circulant} $\mbox{\rm{Circ}}(\mathbf{1})$ has eigenvalues $\bigcup\limits_{j=0}^{9} \sigma \left(\boldsymbol{\mu}_j\right)$, where

\begin{eqnarray*}
\boldsymbol{\mu}_0&=&\sum_{k=0}^9\gamma^k=\begin{pmatrix} 10 & 0 \\ 0 & 10 \end{pmatrix};\\
\quad\boldsymbol{\mu}_j &=& \sum_{k=1}^9\omega^{jk}\gamma^k=\left(\sum_{k=0}^9\omega^{jk}\right)\begin{pmatrix} 1 & 0 \\ 0 & 1 \end{pmatrix}\\
&=&\dfrac{1-\omega^{10j}}{1-\omega^{j}}\begin{pmatrix} 1 & 0 \\ 0 & 1 \end{pmatrix}=\begin{pmatrix} 0 & 0 \\ 0 & 0 \end{pmatrix}, \mbox{\,with\,\,} j=1, \ldots, 9.
\end{eqnarray*}

Thus,
\[
\boldsymbol{\mu}_0=\begin{pmatrix} 10 & 0 \\ 0 & 10 \end{pmatrix}\quad\textit{and}\quad\boldsymbol{\mu}_1 = \boldsymbol{\mu}_2 =\cdots =\boldsymbol{\mu}_9 = \begin{pmatrix} 0 & 0 \\ 0 & 0 \end{pmatrix}.
\]
Therefore, twenty of the eigenvalues of $\mbox{\rm{Circ}}(\mathbf{1})$ are 
\begin{align*}
    \lambda=10&\quad \textit{ with algebraic multiplicity 2,}\\
    \lambda=0&\quad \textit{ with algebraic multiplicity 18.}
\end{align*}
On the other hand, if we compute all the eigenvalues of $\mbox{\rm{Circ}}(\mathbf{1})$ we obtain
\begin{align*}
   \lambda=10&\quad \textit{ with algebraic multiplicity 2,}\\
    \lambda=0&\quad \textit{ with algebraic multiplicity 18}\\
    \lambda=1&\quad \textit{ with algebraic multiplicity 5.}
\end{align*}
\end{example}$\hfill{\diamond}$\\
Note that Theorem~\ref{diag conf circulant} allows us to explicitly calculate $m_1\cdot n$ eigenvalues of $\mbox{\rm{Circ}}(\mathbf{x})$.\\
 \begin{theorem}
  Let $\textbf{x}= {(x_k)}\in\mathbb{K}^n$ and $\mbox{\rm{Circ}}(\textbf{x})=[\mbox{\rm{Circ}}(\textbf{x})[m_i,m_{i+1}]]$ be a conformal circulant matrix as in Definition \ref{conformal_x}. Then
    \begin{equation*}
        x_k=\frac{1}{n} \left( \sum_{j=0}^{n-1} \omega^{-jk} \, \gamma_n \, \boldsymbol{\mu}_j \right) \gamma_n^{-1},\quad\textit{ fo all } k = 0, 1, \dots, n-1,
    \end{equation*} 
    provided that $\gamma_i$ is invertible for all $i=1, \ldots, n$ and $m_1\cdot n=\sum\limits_{k=1}^n m_k$. 
\end{theorem}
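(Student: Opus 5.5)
The plan is to read off the first block row of $\mbox{\rm{Circ}}(\textbf{x})$ from the block diagonalization obtained in the proof of Theorem~\ref{diag conf circulant}, and then to cancel the invertible factor $\gamma_{n-k}$ on the right. The standing hypotheses are that every $\gamma_i$ is invertible and that $m_1\cdot n=\sum_{k=1}^n m_k$. Under these hypotheses, Theorem~\ref{diagonalization H} and the proof of Theorem~\ref{diag conf circulant} give
\[
\mbox{\rm{Circ}}(\textbf{x})=F_{\pi_1}\diag(\boldsymbol{\mu}_0,\ldots,\boldsymbol{\mu}_{n-1})F^{-1}_{\pi_1},
\qquad F_{\pi_1}=\diag(\gamma_n,\gamma_{n-1},\ldots,\gamma_1)\circledc F .
\]
I index block rows and columns by $0,\ldots,n-1$. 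In that proof the authors use the block entries
\[
(F_{\pi_1})_{r,j}=\omega^{rj}\gamma_{n-r},\qquad (F^{-1}_{\pi_1})_{j,s}=\tfrac1n\,\omega^{-js}\gamma_{n-s}^{-1}.
\]
I will first justify the second formula. The blocks $\gamma_{n-r}$ are invertible and square, since $m_{n-r+1}=m_1$ under the dimension hypothesis together with invertibility. Hence $F_{\pi_1}=\diag(\gamma_{n-r})\,(F_0\otimes I_{m_1})$, where $F_0=[\omega^{rj}]$ is the unnormalised Fourier matrix. Its inverse is therefore $(\tfrac1n \overline{F_0}\otimes I_{m_1})\diag(\gamma_{n-r}^{-1})$, which has exactly the stated block entries.

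Next, I compute the $(0,k)$ block of both sides, for $0\le k\le n-1$. On the right-hand side, block multiplication gives
\[
\sum_{j=0}^{n-1}(F_{\pi_1})_{0,j}\,\boldsymbol{\mu}_j\,(F^{-1}_{\pi_1})_{j,k}
=\frac1n\sum_{j=0}^{n-1}\gamma_n\,\boldsymbol{\mu}_j\,\omega^{-jk}\,\gamma_{n-k}^{-1}.
\]
On the left-hand side, Definition~\ref{conformal_x} together with \eqref{nonzeropowersH} shows that the only power of $H_{\pi_1}$ contributing to block $(0,k)$ is $H_{\pi_1}^k$, with coefficient $x_k$. That block is the product $h_1h_2\cdots h_k$, or $I_{m_1}$ when $k=0$. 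In the notation \eqref{gamma_i}, and exactly as displayed in Example~\ref{example conformal circulant}, this block equals $x_k\,\gamma_n\gamma_{n-k}^{-1}$.

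I expect this identification to be the main obstacle. It requires verifying by induction from $\gamma_i=h_{n-i+1}\gamma_{i-1}\gamma^{-1}$, $\gamma_0=I_{m_1}$, that the telescoping relation between $h_1\cdots h_k$ and $\gamma_n\gamma_{n-k}^{-1}$ holds. I would do this with $\gamma^n=h_1\cdots h_n$ from \eqref{gamma_n} in hand. I would also check that the powers of $\gamma$ cancel in the form the paper uses, namely $\gamma_n=I_{m_1}$ and $(\mbox{\rm{Circ}})_{0,k}=x_k\gamma_n\gamma_{n-k}^{-1}$ for $r=0<s=k$. This is precisely the case $r<s$ of the block formula for $(\mbox{\rm{Circ}}(\textbf{x}))_{r,s}$ written out in the proof of Theorem~\ref{diag conf circulant}, so I will invoke that formula directly.

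Finally, I equate the two expressions for the $(0,k)$ block:
\[
x_k\,\gamma_n\gamma_{n-k}^{-1}=\frac1n\Big(\sum_{j=0}^{n-1}\omega^{-jk}\gamma_n\boldsymbol{\mu}_j\Big)\gamma_{n-k}^{-1}.
\]
Multiplying on the right by $\gamma_{n-k}$, which is legitimate by the invertibility hypothesis, gives $x_k\gamma_n=\frac1n\sum_j\omega^{-jk}\gamma_n\boldsymbol{\mu}_j$. Multiplying on the right by $\gamma_n^{-1}$ then yields the stated formula for $x_k$, for every $k=0,1,\ldots,n-1$.
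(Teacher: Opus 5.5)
The gap is the block identity you flagged as the main obstacle and then settled by citing the paper: the formula $h_1h_2\cdots h_k=\gamma_n\gamma_{n-k}^{-1}$ is false, and with the correct block the argument yields a different formula from the one stated. Your overall route is the same as the paper's. You compare the $(0,k)$ block of $F_{\pi_1}\diag(\boldsymbol{\mu}_0,\ldots,\boldsymbol{\mu}_{n-1})F_{\pi_1}^{-1}$ with $x_k h_1\cdots h_k$ and then cancel $\gamma_{n-k}$. Your formula for $F_{\pi_1}^{-1}$ and the right-hand block $\frac1n\bigl(\sum_j\omega^{-jk}\gamma_n\boldsymbol{\mu}_j\bigr)\gamma_{n-k}^{-1}$ are correct.

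To see why the identity fails, note that the factors $\gamma^{-1}$ in $\gamma_i=h_{n-i+1}\gamma_{i-1}\gamma^{-1}$ accumulate on the right. Induction therefore gives $\gamma_i=h_{n-i+1}\cdots h_n\,\gamma^{-i}$. Hence $\gamma_n=\gamma^n\gamma^{-n}=I_{m_1}$, and $h_{k+1}\cdots h_n=\gamma_{n-k}\gamma^{n-k}$ is invertible, so
\[
h_1\cdots h_k=\gamma^n\,(h_{k+1}\cdots h_n)^{-1}=\gamma^n\gamma^{-(n-k)}\gamma_{n-k}^{-1}=\gamma^k\,\gamma_{n-k}^{-1}.
\]
The powers of $\gamma$ do not cancel; a factor $\gamma^k$ survives. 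With the correct block, your argument gives $x_k\gamma^k=\frac1n\sum_{j}\omega^{-jk}\boldsymbol{\mu}_j$. The same identity also follows directly from $\boldsymbol{\mu}_j=\sum_l x_l(\omega^j\gamma)^l$ and $\sum_{j=0}^{n-1}\omega^{j(l-k)}=n\delta_{lk}$.

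No repair of this step is possible, because the statement itself fails for $k\ge 1$ unless $\gamma^k=I_{m_1}$. Take $n=2$, $m_1=m_2=1$, $h_1=4$, $h_2=1$ and $\gamma=2$. Then $\gamma_1=\tfrac12$ and $\gamma_2=1$, and $\mathrm{Circ}(\mathbf{x})=\begin{pmatrix}x_0&4x_1\\ x_1&x_0\end{pmatrix}$, with $\boldsymbol{\mu}_0=x_0+2x_1$ and $\boldsymbol{\mu}_1=x_0-2x_1$. The claimed formula then reads $x_1=\tfrac12(\boldsymbol{\mu}_0-\boldsymbol{\mu}_1)=2x_1$, which is false for $x_1\neq 0$.

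The paper's own proof rests on the same incorrect block formula $(\mathrm{Circ}(\mathbf{x}))_{0,s}=x_s\gamma_n\gamma_{n-s}^{-1}$, taken from the proof of Theorem~\ref{diag conf circulant}. Your proposal therefore reproduces the paper's error rather than departing from it. What your method actually proves, once the block is corrected, is $x_k I_{m_1}=\frac1n\bigl(\sum_{j}\omega^{-jk}\boldsymbol{\mu}_j\bigr)\gamma^{-k}$.
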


\begin{proof} The $(r,s)$ entries of the matrix $\mbox{\rm{Circ}}(\textbf{x})$ are:
\[
(\mbox{\rm{Circ}}(\textbf{x}))_{r,s}=(F_{\pi_1}\diag(\boldsymbol{\mu}_0,\boldsymbol{\mu}_1,\ldots,\boldsymbol{\mu}_{n-1})F^{-1}_{\pi_1})_{r,s}
\]
 where, $F_{\pi_1}=\diag(\gamma_n,\gamma_{n-1},\ldots,\gamma_1) \circledc F$, with $F$ the discrete Fourier transform. Then,
 \[
 (\mbox{\rm{Circ}}(\textbf{x}))_{0,s} = \begin{cases}  x_0 I_{m_1}, & \mbox{\, if \,} s = 0, \\ x_{s} \cdot \gamma_n \, \gamma_{n-s}^{-1}, & \mbox{\, if\, }1 \le s \le n-1, \end{cases}
 \]
 and 
 \[ (F_{\pi_1}\diag(\boldsymbol{\mu}_0,\boldsymbol{\mu}_1,\ldots,\boldsymbol{\mu}_{n-1})F^{-1}_{\pi_1})_{0,s} = \frac{1}{n} \left( \sum_{k=0}^{n-1} \omega^{-ks} \, \gamma_n \, \boldsymbol{\mu}_k \right) \gamma_{n-s}^{-1}.
 \]
 Then,
 \[
 x_s=
 \begin{cases} 
 \frac{1}{n} \, \gamma_n \left( \sum\limits_{k=0}^{n-1} \boldsymbol{\mu}_k \right) \gamma_n^{-1}; & \mbox{\, if \,} s = 0, \\ 
 \frac{1}{n} \left( \sum\limits_{k=1}^{n} \omega^{-ks} \, \gamma_n \, \boldsymbol{\mu}_k \right) \gamma_n^{-1}, & \mbox{\, if \,} 1 \le s \le n-1. 
 \end{cases}
 \]
 Therefore, 
 $$x_s = \frac{1}{n} \left( \sum_{j=0}^{n-1} \omega^{-js} \, \gamma_n \, \boldsymbol{\mu}_j \right) \gamma_n^{-1},\quad \mbox {for all } s = 0, 1, \dots, n-1$$
 \end{proof}   

\begin{theorem}\label{characterization structure}
    A rectangular matrix $A$ is a conformal circulant matrix if and
only if
\[
A=\diag(\gamma_n,\gamma_{n-1},\ldots,\gamma_1)\circledc C\circledc(\diag(\gamma_n,\gamma_{n-1},\ldots,\gamma_1))^{-1}
\]
for some block circulant matrix $C$ and some rectangular matrices $h_{j}\in M_{m_j\times m_{j+1}}(\mathbb{K}), $ with $j=1, \ldots, n$ and $m_1\cdot n=\sum\limits_{k=1}^n m_k$. 
\end{theorem}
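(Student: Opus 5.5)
The plan is to read the conformal products as similarity by the block-diagonal matrix $G=\diag(\gamma_n,\gamma_{n-1},\ldots,\gamma_1)$, whose blocks are the maps $\gamma_{n-i+1}$ of size $m_i\times m_1$, and to prove both directions with the block formula already recorded in Example~\ref{example conformal circulant}. The hypothesis $m_1\cdot n=\sum_k m_k$, together with the standing assumption of Theorem~\ref{diag conf circulant} that every $\gamma_i$ is invertible, makes $G$ a square invertible matrix. It acts blockwise on a block matrix $C=[C_{rs}]$ with all blocks $m_1\times m_1$. Concretely, the statement should be read as
\[
A=G\,C\,G^{-1},\qquad A_{rs}=\gamma_{n-r}\,C_{rs}\,\gamma_{n-s}^{-1},
\]
with indices $r,s=0,\ldots,n-1$ as in the proof of Theorem~\ref{diag conf circulant}.

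\emph{($\Rightarrow$).} Let $A=\mbox{\rm{Circ}}(\textbf{x})=\sum_k x_k(H_{\pi_1})^k$. Equation \eqref{nonzeropowersH} gives the $(r,s)$ block as $x_{s-r}\,h_{r+1}\cdots h_{s}$ for $r<s$, and as $x_{n+s-r}$ times the wrapped-around product for $r>s$.

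1. Use the recursion \eqref{gamma_i} to show by induction that $h_{r+1}\cdots h_s=\gamma_{n-r}\gamma_{n-s}^{-1}$. The recursion gives $\gamma_{n-r}=h_{r+1}\gamma_{n-r-1}\gamma^{-1}$, so the $\gamma^{-1}$ factors telescope between consecutive blocks.
2. Combine this with \eqref{gamma_n}, $h_1\cdots h_n=\gamma^n$. The wrapped product becomes $\gamma_{n-r}\,\gamma^n\,\gamma_{n-s}^{-1}$, possibly after commuting $\gamma^n$ past $\gamma_{n-s}^{-1}$.
3. Define $C_{rs}=x_{s-r}I_{m_1}$ for $r\le s$ and $C_{rs}=x_{n+s-r}\gamma^n$ for $r>s$. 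Then $A=GCG^{-1}$ block by block.
4. Note that $C=\sum_k x_k (J\otimes\gamma)^k$, where $J$ is the ordinary cyclic shift. Thus $C$ is a polynomial in the block-circulant matrix $J\otimes\gamma$, and it is block circulant in the generalized sense, with the wrap-around blocks twisted by $\gamma^n$. Equivalently, $G^{-1}AG=\diag(\boldsymbol{\mu}_j)$ conjugated by $F\otimes I$, which follows from Theorem~\ref{diag conf circulant} and $F_{\pi_1}=G\circledc F$; this is the cleanest way to state it.

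\emph{($\Leftarrow$).} Let $C$ be a block circulant matrix, with blocks $c_{s-r}$ that are polynomials in $\gamma$, or more generally commute with $\gamma$. Run the same computation backwards: $GCG^{-1}$ has $(r,s)$ block $\gamma_{n-r}c_{s-r}\gamma_{n-s}^{-1}$, and step 1 turns this into $c_{s-r}$ placed between the correct $h$-products. This exhibits $A=\sum_k x_k (H_{\pi_1})^k$, once the rectangular $h_j$ are recovered as $h_{n-i+1}=\gamma_i\gamma\gamma_{i-1}^{-1}$ from \eqref{gamma_i}. If $C$ is block circulant with scalar-multiple-of-identity coefficients, $A$ is literally $\mbox{\rm{Circ}}(\textbf{x})$.

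\textbf{Main obstacle.} The class of $C$ must be pinned down. A general block circulant $C$ with arbitrary $m_1\times m_1$ blocks does not yield scalar coefficients $x_k$. I will therefore restrict to block circulants whose first block row is $(x_0I,x_1\gamma,\ldots,x_{n-1}\gamma^{n-1})$, or equivalently prove the statement with $C=(F\otimes I)\diag(\boldsymbol{\mu}_j)(F\otimes I)^{-1}$ and $\boldsymbol{\mu}_j=\sum_k x_k(\omega^j\gamma)^k$, using the inversion formula of the preceding theorem to recover the $x_k$. A second issue is that invertibility of the rectangular $\gamma_i$ only makes sense because $G$ is square under $m_1 n=\sum m_k$. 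I will interpret $G^{-1}$ blockwise as the inverse of the full matrix, rather than as inverses of the individual blocks.
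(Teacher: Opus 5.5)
Your step~1 is false. Steps~2--3, and your claim in ($\Leftarrow$) that a block circulant with scalar-identity blocks gives literally $\mbox{\rm{Circ}}(\mathbf{x})$, inherit the error. Taking $i=n-r$ in \eqref{gamma_i} gives $h_{r+1}\gamma_{n-r-1}=\gamma_{n-r}\gamma$, i.e.\ $h_{r+1}=\gamma_{n-r}\,\gamma\,\gamma_{n-r-1}^{-1}$, which is the formula you use yourself in the converse. When you chain these, only the pairs $\gamma_\bullet^{-1}\gamma_\bullet$ cancel; the factors $\gamma$ accumulate. Using $\gamma_0=\gamma_n=I_{m_1}$,
\[
h_{r+1}\cdots h_s=\gamma_{n-r}\,\gamma^{\,s-r}\,\gamma_{n-s}^{-1}\ (r<s),\qquad h_{r+1}\cdots h_nh_1\cdots h_s=\gamma_{n-r}\,\gamma^{\,n+s-r}\,\gamma_{n-s}^{-1}\ (r>s).
\]
Already for $r=0$, $s=1$ the recursion gives $h_1=\gamma\gamma_{n-1}^{-1}$, not $\gamma_{n-1}^{-1}$, unless $\gamma=I$. (The expression $\gamma_{n-r}\gamma_{n-s}^{-1}$ also appears in Example~\ref{example conformal circulant} and in the proof of Theorem~\ref{diag conf circulant}, but it is not consistent with \eqref{gamma_i}.) Consequently the $C$ of step~3 does not satisfy $A=GCG^{-1}$. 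Even if it did, it is a $\gamma^n$-twisted circulant, not a block circulant, so it would not give the statement. It also does not equal $\sum_k x_k(J\otimes\gamma)^k$ as step~4 asserts: the two differ by conjugation with $\diag(I,\gamma,\ldots,\gamma^{n-1})$.

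The repair makes your direct route work, and it is shorter than the paper's. The corrected identities give $C_{rs}=x_{(s-r)\bmod n}\,\gamma^{(s-r)\bmod n}$, i.e.\ $C=\sum_k x_k(J\otimes\gamma)^k$ with $J=P_{\pi_1}$. This is a genuine block circulant with first block row $(x_0I,x_1\gamma,\ldots,x_{n-1}\gamma^{n-1})$, so your step-4 formula is the right one. Most economically, verify once that $G^{-1}H_{\pi_1}G=J\otimes\gamma$. The $(r,r+1)$ block is $\gamma_{n-r}^{-1}h_{r+1}\gamma_{n-r-1}=\gamma$ by \eqref{gamma_i}. The corner block is $\gamma_1^{-1}h_n\gamma_n=\gamma$ by \eqref{gamma_i} with $i=1$ together with $\gamma_n=I$, which is exactly where \eqref{gamma_n} enters. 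Then $\mbox{\rm{Circ}}(\mathbf{x})=G\,(\sum_k x_k(J\otimes\gamma)^k)\,G^{-1}$ follows at once, with no commuting and no Fourier matrix. The paper instead conjugates by $F_{\pi_1}=G\circledc F$ to reach $\diag(\boldsymbol{\mu}_j)$ and sets $C=F\diag(\boldsymbol{\mu}_j)F^{-1}$ (your step-4 alternative); this is the same matrix.

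Your concern about ($\Leftarrow$) is justified, and the paper's proof does not address it. With all $h_j=I$ and $\gamma=I$ one has $G=I$. Then $C=I_n\otimes c_0$ with $c_0$ non-scalar is block circulant but not conformal circulant, since every $\sum_k x_kH_{\pi_1}^k$ has diagonal blocks $x_0I_{m_i}$. Blocks that are merely polynomials in $\gamma$ are also too many: $C=I_n\otimes\gamma$ with $\gamma$ non-scalar fails the same way. So the restriction to first block rows $(x_0I,x_1\gamma,\ldots,x_{n-1}\gamma^{n-1})$ is necessary, and for that class the converse is immediate from $G^{-1}H_{\pi_1}G=J\otimes\gamma$.

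Finally, a block-diagonal $G$ with $m_{r+1}\times m_1$ blocks is invertible only when every block is square and invertible. So the hypotheses force $m_k=m_1$ for all $k$ and $G^{-1}=\diag(\gamma_n^{-1},\ldots,\gamma_1^{-1})$. Reading $G^{-1}$ as the inverse of the full matrix does not reach any genuinely rectangular case.
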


\begin{proof}
    Let $\textbf{x}=(x_0,x_1,\ldots,x_{n-1})\in\mathbb{K}^n$, $\{h_{i}\}$ be a family of $m_i\times m_{i+1}, i=1,\ldots, n$ rectangular matrices with $m_{n+1}=m_1$, and $\gamma$ be an invertible and diagonalizable matrix satisfying
\begin{equation}
\gamma^n=h_1h_2\cdots h_n.
\end{equation} The matrix $A$ is a conformal circulant matrix if only if,
    \[
    F_{\pi_1}^{-1}AF_{\pi_1}=\sum_{k=0}^{n-1}x_k(\diag(\gamma,\omega\gamma,\ldots,\omega^{n-1}\gamma))^k,
    \]
for some $\bf x\in\mathbb{K}^n$ and some rectangular matrices $h_j$ in the previous conditions. Thus,
\[
F_{\pi_1}^{-1}AF_{\pi_1}=\diag(\boldsymbol{\mu}_0,\boldsymbol{\mu}_1,\ldots,\boldsymbol{\mu}_{n-1}).
\]
Then
\small{
\begin{align*}
A&=F_{\pi_1}\diag(\boldsymbol{\mu}_0,\boldsymbol{\mu}_1,\ldots,\boldsymbol{\mu}_{n-1})F_{\pi_1}^{-1}\\
&=\diag(\gamma_n,\gamma_{n-1},\ldots,\gamma_1)\circledc F\diag(\boldsymbol{\mu}_0,\boldsymbol{\mu}_1,\ldots,\boldsymbol{\mu}_{n-1}) (\diag(\gamma_n,\gamma_{n-1},\ldots,\gamma_1)\circledc F)^{-1}\\
&=\diag(\gamma_n,\gamma_{n-1},\ldots,\gamma_1)\circledc F\diag(\boldsymbol{\mu}_0,\boldsymbol{\mu}_1,\ldots,\boldsymbol{\mu}_{n-1})F^{-1}\circledc(\diag(\gamma_n,\gamma_{n-1},\ldots,\gamma_1))^{-1}\\
&=\diag(\gamma_n,\gamma_{n-1},\ldots,\gamma_1)\circledc C\circledc(\diag(\gamma_n,\gamma_{n-1}\ldots,\gamma_1))^{-1}
\end{align*}
}
 where $\gamma_i=h_{n-i+1}\gamma_{i-1}\gamma^{-1}$, for $i=1,\ldots,n$, with $\gamma_0=I_{m_1}$ and the matrix $$C=F\diag(\boldsymbol{\mu}_0,\boldsymbol{\mu}_1,\ldots,\boldsymbol{\mu}_{n-1})F^{-1},$$ is some block circulant matrix.
\end{proof}
The next corollary can be easily shown using the characterization in Theorem  \ref{characterization structure} and the properties of block circulant matrices. Provided that the order that defines them allows the indicated matrix operations to be performed.
\begin{coro}
     If $A=\mbox{\rm{Circ}}(\textbf{x})$ and $B=\mbox{\rm{Circ}}(\textbf{y})$ are conformal circulant matrices for $\textbf{x}, \textbf{y} \in\mathbb{K}^n$ defined by same $\gamma$ and $\gamma_k$ rectangular matrices, and $\alpha$ and $\beta$ are scalars, then $A^{-1}$ $A^T$, $\alpha A+\beta B$ and $AB$ are conformal circulant matrices.
\end{coro}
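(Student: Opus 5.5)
The plan is to transport each operation through the characterization in Theorem~\ref{characterization structure}. Write $G=\diag(\gamma_n,\gamma_{n-1},\ldots,\gamma_1)$. Since $A$ and $B$ are built from the same $\gamma$ and the same $\gamma_k$, the conjugating factor $G\circledc(\cdot)\circledc G^{-1}$ is common to both. Hence
\[
A=G\circledc C_A\circledc G^{-1},\qquad B=G\circledc C_B\circledc G^{-1},
\]
with $C_A=F\diag(\boldsymbol{\mu}_0,\ldots,\boldsymbol{\mu}_{n-1})F^{-1}$ and $C_B$ the analogous block circulant built from the $\boldsymbol{\nu}_j=\sum_k y_k(\omega^j\gamma)^k$. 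Equivalently, $F_{\pi_1}^{-1}AF_{\pi_1}$ and $F_{\pi_1}^{-1}BF_{\pi_1}$ are both block diagonal. Throughout, I will work in the setting of Theorem~\ref{diag conf circulant}, with $m_1 n=\sum m_k$ and the $\gamma_i$ invertible, so that $F_{\pi_1}$ is invertible.

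Linear combination and product are then immediate. First,
\[
F_{\pi_1}^{-1}(\alpha A+\beta B)F_{\pi_1}=\diag(\alpha\boldsymbol{\mu}_j+\beta\boldsymbol{\nu}_j),
\]
and $\alpha\boldsymbol{\mu}_j+\beta\boldsymbol{\nu}_j=\sum_k(\alpha x_k+\beta y_k)(\omega^j\gamma)^k$. So $\alpha A+\beta B=\mbox{\rm{Circ}}(\alpha\textbf{x}+\beta\textbf{y})$; this can also be read directly off Definition~\ref{conformal_x}, since $\mbox{\rm{Circ}}$ is linear in $\textbf{x}$. Second,
\[
F_{\pi_1}^{-1}ABF_{\pi_1}=\diag(\boldsymbol{\mu}_j\boldsymbol{\nu}_j).
\]
Because $\boldsymbol{\mu}_j$ and $\boldsymbol{\nu}_j$ are polynomials $p(\omega^j\gamma)$ and $q(\omega^j\gamma)$, their product is $(pq)(\omega^j\gamma)$. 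Reducing $pq$ modulo $t^n$ is the step that needs care: from $(\omega^j\gamma)^n=\gamma^n=h_1\cdots h_n$ one gets $(\omega^j\gamma)^{n+r}=(\omega^j\gamma)^r\gamma^n$, which is not a scalar multiple of a lower power. The cleaner route is to avoid the eigen-side entirely and use $AB=\sum_{k,l}x_ky_l(H_{\pi_1})^{k+l}$, noting that $(H_{\pi_1})^n=\diag(D_1,\ldots,D_n)$ by Lemma~\ref{Def Dj matrices}.

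The transpose and the inverse are the main obstacles. For the inverse I would take $\boldsymbol{\mu}_j^{-1}$, assuming every $\boldsymbol{\mu}_j$ is invertible, and then need $\boldsymbol{\mu}_j^{-1}=r(\omega^j\gamma)$ for a single polynomial $r$ of degree $<n$ with scalar coefficients, independent of $j$. Using the diagonalizability of $\gamma$, I would attempt a Lagrange interpolation on the finite set of values $\{\omega^j\lambda:\lambda\in\sigma(\gamma)\}$ to produce $r$ with $r(\omega^j\lambda)=p(\omega^j\lambda)^{-1}$. This forces $\deg r$ possibly $\ge n$, and I would then reduce using $\gamma^n$ as above. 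For the transpose, $A^T$ has blocks $h_{j}^T\cdots h_i^T$, so it is a conformal circulant associated with the reversed family $h_{n-i}^T$. I would re-run Theorem~\ref{characterization structure} with $\gamma^T$, noting that $(\gamma^T)^n=(h_1\cdots h_n)^T$ reorders correctly. The statement ``same $\gamma$'' is likely only valid up to this replacement, and I would flag it as needing an extra hypothesis.

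The step where the argument is genuinely likely to break is the closure of products and inverses under the degree-$<n$ constraint. My fallback is to interpret ``conformal circulant'' via the characterization, meaning any matrix of the form $G\circledc C\circledc G^{-1}$ with $C$ block circulant. Under that reading all four claims follow at once, because block circulant matrices are closed under linear combinations, products, inverses and (up to the index reversal) transposes, and the factor $G\circledc(\cdot)\circledc G^{-1}$ is an algebra isomorphism by Remark~\ref{operations A-conformal matrix}.
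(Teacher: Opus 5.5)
Your fallback is exactly the paper's proof. The paper's entire argument for this corollary is the one-line appeal to Theorem~\ref{characterization structure} together with closure properties of block circulant matrices.

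However, the obstruction you identified before falling back is genuine, and no argument can repair it: under Definition~\ref{conformal_x} the product and inverse claims are false. For $1\le k\le n-1$ the nonzero blocks of $(H_{\pi_1})^k$ sit at positions $(i,i+k)$, off the diagonal. Hence every $\mbox{\rm{Circ}}(\mathbf{z})$, for any family $h_i$, has diagonal blocks $z_0I_{m_i}$ and therefore a constant main diagonal.

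Take $n=2$, $h_1=\diag(1,4)$, $h_2=I_2$. Then $\gamma=\diag(1,2)$, $\gamma_1=\gamma^{-1}$, $\gamma_2=I_2$, and all hypotheses hold.
\begin{itemize}
\item For $A=B=H_{\pi_1}=\mbox{\rm{Circ}}(0,1)$ you get $AB=\diag(h_1h_2,h_2h_1)=\diag(1,4,1,4)$.
\item For $A=\mbox{\rm{Circ}}(3,1)$ you get $A^{-1}=(3I-H_{\pi_1})\diag(\tfrac18,\tfrac15,\tfrac18,\tfrac15)$, whose diagonal is $(\tfrac38,\tfrac35,\tfrac38,\tfrac35)$.
\end{itemize}
Neither matrix has a constant diagonal, so neither is a conformal circulant.

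The same example shows that the ``if'' half of Theorem~\ref{characterization structure} is false, and both your fallback and the paper rest on that half. Indeed $H_{\pi_1}^2=G\,(I_2\otimes\gamma^2)\,G^{-1}$ with $G=\diag(\gamma_2,\gamma_1)$, and the middle factor is block circulant. Block-diagonality of $F_{\pi_1}^{-1}AF_{\pi_1}$ does not force its diagonal blocks to have the form $\sum_{k<n}x_k(\omega^j\gamma)^k$ for a single scalar vector $\mathbf{x}$.

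What your fallback actually proves is closure of the larger class $\{G\,C\,G^{-1}: C\ \text{block circulant}\}$ under $\alpha A+\beta B$, $AB$ and $A^{-1}$. Linear combinations are the only claim that genuinely holds for $\mbox{\rm{Circ}}(\mathbf{x})$, as you noted. To get a true statement you must either take that larger class as the definition, or add a hypothesis such as $(H_{\pi_1})^n=c\,I$. Under that hypothesis your reduction $H_{\pi_1}^{n+r}=c\,H_{\pi_1}^{r}$ works for products, and, via Cayley--Hamilton, for inverses.

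Even in the larger class, the transpose does not follow ``at once''. Conjugation by $G$ is an algebra isomorphism, but it does not commute with transposition: $(GCG^{-1})^T=G^{-T}C^TG^{T}$. You must realize $G^{-T}$ as $\diag(\gamma'_n,\ldots,\gamma'_1)$ for a new family, for instance $\gamma'=I$ and $h'_{n-i+1}=\gamma_i^{-T}\gamma_{i-1}^{T}$. So the transpose is handled only after changing $\gamma$, as you suspected, not by the isomorphism argument with the same $\gamma$.
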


\section{Conclusions}

In this work, the conformal matrices $H_{\pi_1}$ corresponding to weighted shift operators (a linear operator that maps each vector of a fixed basis to a scalar multiple of another basis vector, with the scalar representing the corresponding weight)
defined on the vector space $\mathbb{C}^N$ and associated with the weighted adjacency matrix of a directed cyclic graph $C_N$, are defined and studied. In advanced linear algebra, this family of operators is characterized by its non-normality ($H_{\pi_1} H_{\pi_1}^\dagger \neq H_{\pi_1}^\dagger H_{\pi_1}$) (where $H_{\pi_1}^\dagger$ denotes the conjugate transpose (adjoint) of $H_{\pi_1}$), and its spectral analysis is driven by the study of systems with transport asymmetry \cite{Hatano}. In quantum mechanics and neutrino phenomenology, this algebraic structure acquires direct physical meaning when the effective Hamiltonian of the system $H_{\text{eff}} \in \mathbb{C}^{k \times k}$ (an operator that describes the dynamics of an effective system) ceases to be self-adjoint (Hermitian) in order to describe dissipative dynamics, decays, or asymmetric interactions with the environment \cite{Berryman,Mena}.The analysis of the flavor transition matrix in neutrino oscillations—modeled as a system of $k$ flavor states—depends critically on the non-orthogonal spectral decomposition of $H_{\text{eff}}$ and its left and right eigenvectors (biorthogonal bases) \cite{Berryman,Hatano,Sunderhauf}. Furthermore, it is of particular interest to consider matrix polynomials of the form $\sum_{i=0}^{n-1} x_i H_{\pi_1}^i$, since they constitute a natural and physically relevant generalization of classical circulating matrix structures.
On the other hand, the matrix $F_{\pi_1}$ can be viewed as a generalization of the Discrete Fourier transform matrix, as its columns consist of the eigenvectors of $\mbox{\rm{Circ}}(\textbf{x})$. However, key properties, such as unitarity ($F_{\pi_1}F_{\pi_1}^\dagger = I$), are not preserved because the eigenvectors of $\mbox{\rm{Circ}}(\textbf{x})$ are not mutually orthogonal. A detailed analysis of the properties and applications of $F_{\pi_1}$ remains a topic for future research.

\textbf{Acknowledgments}

E.~Andrade is supported by CIDMA (\url{https://ror.org/05pm2mw36}) under the Portuguese Foundation for Science and Technology (FCT, \url{https://ror.org/00snfqn58}), through the grants\\
\href{https://doi.org/10.54499/UID/04106/2025}{UID/04106/2025} and
\href{https://doi.org/10.54499/UID/PRR/04106/2025}{UID/PRR/04106/2025}.

\bibliographystyle{alpha}
\bibliography{references}
\end{document}